\newtheorem{theorem}{Theorem}
\newtheorem{lemma}[theorem]{Lemma}
\newtheorem*{lemma*}{Lemma}
\newtheorem{proposition}[theorem]{Proposition}
\newtheorem{corollary}[theorem]{Corollary}
\newtheorem{definition}[theorem]{Definition}
\newtheorem{remark}[theorem]{Remark}
\newcommand{\Z}{\mathbb{Z}}
\newcommand{\R}{\mathbb{R}}
\newcommand{\C}{\mathbb{C}}
\newcommand{\E}{\mathbb{E}}
\begin{document}

\title[Random unitary matrices and GMC]{The characteristic polynomial of a random unitary matrix and Gaussian multiplicative chaos - The $L^2$-phase.}
\author{Christian Webb}
\address{Department of mathematics and systems analysis, Aalto University, PO Box 11000, 00076 Aalto, Finland}
\email{christian.webb@aalto.fi}
\date{\today}

\begin{abstract} 
\ We study the characteristic polynomial of Haar distributed random unitary matrices. We show that after a suitable normalization, as one increases the size of the matrix, powers of the absolute value of the characteristic polynomial as well as powers of the exponential of its argument converge in law to a Gaussian multiplicative chaos measure for small enough real powers. This establishes a connection between random matrix theory and the theory of Gaussian multiplicative chaos.
\end{abstract}

\maketitle

\section{Introduction}

Studying the eigenvalues of the Circular Unitary Ensemble (CUE) - that is Haar distributed random unitary matrices - is a classical problem in random matrix theory \cite{dyson}. More recently it has gotten a lot of attention due to the conjectured relationship between the Riemann $\zeta$-function on the critical line $t\mapsto \frac{1}{2}+it$ and characteristic polynomials of large random matrices - namely it is believed that statistical properties of the $\zeta$ function evaluated at a random point on the critical line are related to the corresponding properties of the characteristic polynomial of a large random matrix, see e.g. \cite{ks}. The goal of this note is to describe the asymptotic behavior of the characteristic polynomial of a large Haar distributed unitary matrix when the characteristic polynomial is evaluated on the unit circle (where the eigenvalues lie).

\vspace{0.3cm}

There are of course existing results on the asymptotic behavior of the characteristic polynomial. For example, in \cite{ks}, it is shown that after normalizing by the variance, the logarithm of the characteristic polynomial at a single point is asymptotically Gaussian. This was refined in \cite{bhny}, where an exact decomposition for the law of the characteristic polynomial at a single point was given. On the other hand, in \cite{cnn}, it was shown that on the microscopic scale, the characteristic polynomial behaves like a random analytic function up to a normalization. The results that are closest to ours, and also the strongest motivation for this work, are those of Diaconis and Shahshahani \cite{ds} as well as Hughes, Keating, and O'Connell \cite{hko}, who proved among other things, that the real and imaginary parts of the logarithm of the characteristic polynomial restricted to the unit circle converge in law to a pair of Gaussian fields which can be represented as random generalized functions whose covariance kernel has a logarithmic singularity. 

\vspace{0.3cm}

In the 80s, Kahane constructed a theory for exponentiating such fields and understanding this exponential as a random multifractal measure \cite{kahane}. The theory is known as Gaussian Multiplicative Chaos (GMC). For a comprehensive review, see \cite{gmcrev}. Recently these measures have been of great interest due to their role in the mathematical study of two-dimensional quantum gravity (see e.g. \cite{kpz} for the physical motivation and \cite{dupshef,dkrv} for mathematical results). Thus conjecturally, these measures also play a role in the study of random planar maps (see e.g. \cite{dupshef,dkrv} for mathematical conjectures, and \cite{de} for a physical and historical point of view). Other applications of multiplicative chaos are construction of random planar curves through conformal welding \cite{ajks,shefweld}, Quantum Loewner Evolution \cite{qle}, studying properties of Gibbs measures of disordered systems \cite{cld}, energy dissipation in turbulence \cite{kol,obu}, and models for asset returns in mathematical finance \cite{bm}.

\vspace{0.3cm}

In \cite{fk}, Fyodorov and Keating essentially conjectured that as the size of the matrix tends to infinity, real powers of the absolute value of the characteristic polynomial of a CUE matrix converges to a GMC measure once suitably normalized. They then used this conjecture to make further conjectures about the absolute value of the characteristic polynomial and the $\zeta$-function on the critical line. Our main result will be that indeed, for small enough real powers, powers of the absolute value of the characteristic polynomial on the unit circle (as well as powers of the exponential of the argument of the characteristic polynomial) will converge in law to a GMC measure.

\vspace{0.3cm}

In addition to perhaps describing some properties of the $\zeta$-function, another motivation for this work is that this type of result can be seen as a new type of geometric limit theorem in the framework of random matrices. These types of results are likely to be rather universal in random matrix theory (see the discussion at the end of this paper), though to our knowledge it is the first of its kind.  As mentioned, limit theorems concern often a single point or the microscopic scale (or perhaps the mesoscopic scale). The global results of \cite{ds,hko} describe convergence to a rough object whose geometry is not easy to study. In fact, it seems that these measures are the correct way to study the geometry of the underlying Gaussian field. For example, these measures play a critical role in understanding the extrema of the field \cite{bdz,mad,drz}. Also the measures can be used to study the field's fractal properties (e.g. thick points of the field and a geometrical KPZ relation \cite{kahane,dupshef,kpzrv,gmcrev}). In \cite{bss} it was shown in the particular case of the Gaussian Free Field that this exponentiation does not lose any information about the field so all of the geometric properties of the field should be visible in the measure.

\vspace{0.3cm}

On the other hand, from the point of view of the theory of Gaussian Multiplicative Chaos itself, our result gives a very different type of construction of the measure than those common in the literature. Usually one uses Gaussian or even martingale approximations to the field which are essentially tailored to ensure convergence of the approximating measure. Here we have an approximation arising from a completely different model and one has no martingale property or Gaussianity until one passes to the limit. Thus the results here suggest that perhaps the measures are quite universal objects, or that this procedure of exponentiating a distribution is continuous in some sense, namely any "reasonable" approximation to the field should give a way to construct a GMC measure.

\vspace{0.3cm}

As the methods used in this paper are not that original (we use a natural approximation for the characteristic polynomial and a rather elementary approach to proving convergence coupled with powerful recent results on Toeplitz determinants in \cite{deift,claeyskrasovsky}), the main goal of this article is pointing out this connection between two important areas in modern probability theory and some of the interesting questions that this connection implies for both random matrix theory as well as the theory of Gaussian multiplicative chaos.

\vspace{0.3cm}

The outline of this paper is the following. We begin with recalling some facts and results about the CUE, describe our object of interest and state our main theorem and sketch the strategy of our proof. Next we discuss the relationship between the characteristic polynomial and Toeplitz determinants with Fisher-Hartwig singularities. We then review recent results from \cite{claeyskrasovsky,deift} on asymptotics of such Toeplitz determinants. Using these results we prove convergence to a GMC measure. Finally we discuss some open questions this result implies. For the convenience of the reader, we also have an appendix on Gaussian Multiplicative Chaos measures and Sobolev Spaces.

\section{The Circular Unitary Ensemble, the Main Result, and the Strategy of the Proof}

In this section, we will describe our basic model, object of interest, and main theorem as well as sketch the strategy for proving it.

\vspace{0.3cm}

As noted in the introduction, we are interested in $n\times n$-dimensional random matrices distributed according to the (unique) Haar probability measure on the unitary group $U(n)$. Let us denote such a matrix by $U_n$. By the Weyl integration formula applied to $U(n)$, the eigenvalues of $U_n$, which we denote by $(e^{i\theta_1},...,e^{i\theta_n})$ (with $\theta_i\in[0,2\pi)$), are distributed according to 

\begin{equation}\label{eq:cuelaw}
\frac{1}{n!}\prod_{k<j}|e^{i\theta_k}-e^{i\theta_j}|^{2}\prod_{k=1}^{n}\frac{d\theta_k}{2\pi}.
\end{equation}

We are interested in the characteristic polynomial of $U_n$, namely we evaluate it on the unit circle (where all of its zeros lie) and define

\begin{equation}
p_n(\theta)=\det(1-e^{-i\theta}U_n)=\prod_{k=1}^{n}(1-e^{i(\theta_k-\theta)}).
\end{equation}

To describe the asymptotic properties of $p_n(\theta)$, we study its absolute value and phase. It will turn out to be natural to consider suitable powers of these. More precisely, we introduce the following object, which will be the main object of interest in the rest of this article.

\begin{definition}\label{def:main}
For $\alpha,\beta\in \R$, $n\in \Z_+$ and $\theta\in[0,2\pi)$, let 

\begin{equation}
f_{n,\alpha,\beta}(\theta)=|p_n(\theta)|^{\alpha}e^{\beta \mathrm{Im}\log p_n(\theta)},
\end{equation}

\noindent where by $\mathrm{Im}\log p_n(\theta)$ we mean the branch of the logarithm where

\begin{equation}
\mathrm{Im}\log p_n(\theta)=\sum_{k=1}^{n}\mathrm{Im}\log (1-e^{i(\theta_k-\theta)})
\end{equation}

\noindent and 

\begin{equation}
\mathrm{Im}\log (1-e^{i(\theta_k-\theta)})\in\left(-\frac{\pi}{2},\frac{\pi}{2}\right].
\end{equation}

We also consider the random Radon measure on the unit circle defined by 

\begin{equation}
\mu_{n,\alpha,\beta}(d\theta)=\frac{f_{n,\alpha,\beta}(\theta)}{\E(f_{n,\alpha,\beta}(\theta))}d\theta.
\end{equation}

\end{definition}

We then recall a result from \cite{ds} concerning traces of powers of $U_n$.

\begin{theorem}[Diaconis and Shahshahani]\label{th:ds}
Let $(Z_i)_{i=1}^{\infty}$ be i.i.d. standard complex Gaussians, i.e. complex random variables whose real and imaginary parts are independent centered real Gaussians with variance $\frac{1}{2}$. Then for any fixed $k$, 

\begin{equation}
\left(\mathrm{Tr}U_n,\frac{1}{\sqrt{2}}\mathrm{Tr}U_n^{2}...,\frac{1}{\sqrt{k}}\mathrm{Tr}U_n^{k}\right)\stackrel{d}{\to}(Z_1,...,Z_k)
\end{equation}

\noindent as $n\to\infty$.

\end{theorem}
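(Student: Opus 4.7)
I would prove the theorem by the method of moments. Since a vector of independent standard complex Gaussians is determined by its mixed moments
\begin{equation*}
\E\left[\prod_{j=1}^k Z_j^{a_j}\overline{Z_j}^{b_j}\right] = \prod_{j=1}^k \delta_{a_j, b_j}\, a_j!,
\end{equation*}
it suffices to check that for every choice of nonnegative integers $(a_j)_{j=1}^k, (b_j)_{j=1}^k$,
\begin{equation*}
\E\left[\prod_{j=1}^k \mathrm{Tr}(U_n^j)^{a_j}\, \overline{\mathrm{Tr}(U_n^j)}^{b_j}\right] \;\xrightarrow[n\to\infty]{}\; \prod_{j=1}^k \delta_{a_j, b_j}\, j^{a_j}\, a_j!,
\end{equation*}
where the factor $j^{a_j}$ arises from undoing the $1/\sqrt{j}$ normalization in the statement.

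\textbf{Computation via symmetric functions.} The plan is to realize $\mathrm{Tr}(U_n^j)$ as the $j$-th power-sum symmetric polynomial $p_j$ evaluated at the eigenvalues $\lambda_1,\ldots,\lambda_n$ of $U_n$, so that (since $U_n$ is unitary) $\overline{\mathrm{Tr}(U_n^j)} = p_j(\lambda^{-1})$. Letting $\mu$ (respectively $\nu$) denote the partition with $a_j$ (respectively $b_j$) parts equal to $j$, the joint moment becomes
\begin{equation*}
\int_{U(n)} p_\mu(U)\, \overline{p_\nu(U)}\, dU.
\end{equation*}
I would then expand both factors via the Frobenius formula $p_\rho = \sum_\lambda \chi^\lambda_\rho\, s_\lambda$, where $\chi^\lambda_\rho$ is the value of the irreducible $S_{|\rho|}$-character $\chi^\lambda$ on the conjugacy class of cycle type $\rho$, and invoke the Schur--Weyl fact that the Schur polynomials $\{s_\lambda : \ell(\lambda)\leq n\}$ are precisely the irreducible polynomial characters of $U(n)$ and hence orthonormal in $L^2(U(n),dU)$. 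Provided $n \geq \max(|\mu|, |\nu|)$, so that no Schur function in the Frobenius expansion is identically zero, this reduces the unitary integral to the Hall inner product
\begin{equation*}
\langle p_\mu, p_\nu\rangle = \delta_{\mu, \nu}\, z_\mu, \qquad z_\mu = \prod_{j=1}^k j^{a_j}\, a_j!,
\end{equation*}
which matches the target exactly.

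\textbf{Main obstacle and conclusion.} The only delicate ingredient is the constraint $\ell(\lambda)\leq n$ in Schur orthogonality, and the attractive feature of this approach is that once $n \geq \max(|\mu|, |\nu|)$ this constraint becomes vacuous: the matrix moments agree with the Gaussian moments \emph{exactly} for all sufficiently large $n$, not merely asymptotically, so no error estimate is required. Joint convergence in law of the $\mathbb{C}^k$-valued random vector $(\mathrm{Tr}(U_n^j)/\sqrt{j})_{j=1}^k$ then follows from the standard multivariate moment convergence theorem applied to real and imaginary parts, since the $2k$-dimensional centered Gaussian limit (with covariance $\tfrac{1}{2} I$) is determined by its moments.
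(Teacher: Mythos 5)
Your proposal is correct, and it is essentially the argument of Diaconis and Shahshahani themselves: the paper states this theorem without proof, citing \cite{ds}, where the proof proceeds exactly as you describe --- the exact identity $\E\bigl[\prod_j \mathrm{Tr}(U_n^j)^{a_j}\,\overline{\mathrm{Tr}(U_n^j)}^{b_j}\bigr]=\delta_{\mu\nu}z_\mu$ for $n\geq\max(|\mu|,|\nu|)$, obtained from the Frobenius expansion into Schur functions and character orthogonality, followed by the method of moments for the (moment-determinate) Gaussian limit. No gaps.
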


We also recall the following result from \cite{hko} where it was noted that Theorem \ref{th:ds} can be used to describe the asymptotic behavior of the logarithm of the characteristic polynomial. For the definition of the Sobolev space $\mathcal{H}^{-\epsilon}_0$, see the appendix.

\begin{theorem}[Hughes, Keating, and O'Connell]\label{th:hko}
For any $\epsilon>0$, the pair $(\log |p_n(\theta)|,\mathrm{Im}\log p_n(\theta))$ (where $\mathrm{Im}\log p_n(\theta)$ is interpreted as in Definition \ref{def:main}) converges in law in $\mathcal{H}_0^{-\epsilon}\times \mathcal{H}_0^{-\epsilon}$ to the pair of Gaussian fields $(X(\theta),\widehat{X}(\theta))$, where 

\begin{equation}
X(\theta)=\frac{1}{2}\sum_{k=1}^{\infty}\frac{1}{\sqrt{k}}(Z_k e^{ik\theta}+Z_k^{*}e^{-ik\theta}),
\end{equation}

\begin{equation}
\widehat{X}(\theta)=\frac{1}{2}\sum_{k=1}^{\infty}\frac{1}{\sqrt{k}}(i Z_k e^{ik\theta}-iZ_k^{*}e^{-ik\theta}),
\end{equation}

\noindent and $(Z_k)_{k=1}^{\infty}$ are i.i.d. standard complex Gaussians.
\end{theorem}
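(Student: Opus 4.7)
The idea is to write both $\log|p_n(\theta)|$ and $\mathrm{Im}\log p_n(\theta)$ as explicit Fourier series in $\theta$ whose coefficients are linear in the traces $\mathrm{Tr}(U_n^k)$, and then combine a finite-dimensional limit from Theorem \ref{th:ds} with a uniform Sobolev tail bound.

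\emph{Step 1 (Fourier expansion).} I would start from the distributional identities
\begin{equation*}
\log|1-e^{i\phi}| = -\sum_{k=1}^\infty \frac{\cos(k\phi)}{k}, \qquad \mathrm{Im}\log(1-e^{i\phi}) = -\sum_{k=1}^\infty \frac{\sin(k\phi)}{k},
\end{equation*}
valid for $\phi \in (0,2\pi)$ and compatible with the branch fixed in Definition \ref{def:main}. Substituting $\phi = \theta_j - \theta$ and summing over the eigenvalue angles yields
\begin{equation*}
\log|p_n(\theta)| = -\frac{1}{2}\sum_{k=1}^\infty \frac{1}{k}\bigl(\mathrm{Tr}(U_n^k)\, e^{-ik\theta} + \overline{\mathrm{Tr}(U_n^k)}\, e^{ik\theta}\bigr),
\end{equation*}
and the analogous identity for $\mathrm{Im}\log p_n(\theta)$ in which the bracketed expression is replaced by $i\bigl(\mathrm{Tr}(U_n^k)\,e^{-ik\theta} - \overline{\mathrm{Tr}(U_n^k)}\,e^{ik\theta}\bigr)$. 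Thus the Fourier coefficients of each field are explicit linear functions of the traces.

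\emph{Step 2 (Uniform Sobolev tail bound).} The trace-variance identity $\E|\mathrm{Tr}(U_n^k)|^2 = \min(k,n)$ (which also underlies Theorem \ref{th:ds}) gives, for any $\epsilon > 0$,
\begin{equation*}
\E\bigl\|\log|p_n|\bigr\|_{\mathcal{H}_0^{-\epsilon}}^2 \;\lesssim\; \sum_{k=1}^\infty \frac{\min(k,n)}{k^{2+2\epsilon}} \;\leq\; \sum_{k=1}^\infty \frac{1}{k^{1+2\epsilon}} \;<\; \infty,
\end{equation*}
uniformly in $n$, and the same bound for $\mathrm{Im}\log p_n$. By Chebyshev, the contribution of frequencies $|k|>K$ to the Sobolev norm is smaller than any prescribed $\delta>0$ with probability at least $1-\delta$, uniformly in $n$, once $K$ is large enough. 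The analogous estimate for the candidate limits $X,\widehat{X}$ is immediate from $\E|Z_k|^2=1$.

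\emph{Step 3 (Finite-dimensional convergence and conclusion).} For each fixed $K$, the pair of frequency-truncated fields is a continuous (indeed linear) function of the finite vector $(\mathrm{Tr}(U_n^k))_{k=1}^K$ taking values in $\mathcal{H}_0^{-\epsilon}\times\mathcal{H}_0^{-\epsilon}$. Theorem \ref{th:ds} and the continuous mapping theorem then yield convergence in law of this truncated pair, and a short calculation, using the invariance of the standard complex Gaussian law under $Z\mapsto -Z^*$, identifies the limit with the $K$-truncation of $(X,\widehat{X})$. A standard three-epsilon argument combining this truncated convergence with the uniform tail bound of Step 2 gives the desired convergence in law in $\mathcal{H}_0^{-\epsilon}\times\mathcal{H}_0^{-\epsilon}$. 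The only delicate point (and the main obstacle in spirit) is verifying that the Fourier series of Step 1 genuinely represent $\log|p_n|$ and $\mathrm{Im}\log p_n$ as elements of $\mathcal{H}_0^{-\epsilon}$; this is exactly what the tail bound ensures, since the partial sums converge in second mean in $\mathcal{H}_0^{-\epsilon}$ uniformly in $n$. No input beyond the single-trace second moment and the purely analytic Fourier expansion of $\log(1-e^{i\phi})$ is required.
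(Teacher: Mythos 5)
The paper does not actually prove Theorem \ref{th:hko}; it is imported from \cite{hko}. Your route — expand $\log p_n$ as a Fourier series whose coefficients are $\propto \mathrm{Tr}(U_n^k)/k$, apply Theorem \ref{th:ds} to finitely many frequencies, and kill the Sobolev tail uniformly in $n$ using $\E|\mathrm{Tr}(U_n^k)|^2=\min(k,n)$ — is precisely the standard argument and the one the paper alludes to when it says Theorem \ref{th:ds} "can be used to describe" the log of the characteristic polynomial. The architecture (truncation, continuous mapping, uniform second-moment tail bound, Kallenberg's Theorem 4.28-type limit interchange) is sound, and you correctly flag that the exact trace variance identity is an input beyond the literal statement of Theorem \ref{th:ds}, though it comes from the same source. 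One cosmetic remark: the identification of the Fourier coefficients of $\log|p_n|$ is not "ensured by the tail bound" — it follows from the deterministic $L^2$ convergence of the series for $\log|1-e^{i\phi}|$; the tail bound only supplies uniformity in $n$.

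There is, however, a sign error in Step 1 that propagates to the identification of the joint limit law, and it is not harmless. From $\mathrm{Im}\log(1-e^{i\phi})=-\sum_{k\geq 1}\sin(k\phi)/k$ one gets
\begin{equation*}
\mathrm{Im}\log p_n(\theta)=-\frac{1}{2}\sum_{k=1}^{\infty}\frac{1}{k}\Bigl(-i\bigl(\mathrm{Tr}(U_n^k)e^{-ik\theta}-\overline{\mathrm{Tr}(U_n^k)}\,e^{ik\theta}\bigr)\Bigr),
\end{equation*}
i.e. the bracket should carry $-i$, not the $+i$ you wrote (check $n=1$, $\theta=0$: your bracket yields $+\sum_k\sin(k\theta_1)/k$ instead of $-\sum_k\sin(k\theta_1)/k$; this also matches the paper's expansion of $\log f_{n,\alpha,\beta}$, whose $\beta$-coefficient is $(\alpha-\beta i)\mapsto -i$). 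Carrying your sign through Step 3 and the substitution $Z\mapsto -Z^{*}$ identifies the limit of the pair as $(X,-\widehat{X})$ rather than $(X,\widehat{X})$. These do \emph{not} have the same joint law: both marginals agree, but the cross-covariance $\E X(\theta)\widehat{X}(\theta')=\sum_k k^{-1}\sin(k(\theta-\theta'))$ is not identically zero, so negating $\widehat{X}$ changes the law of the pair. The fix is purely mechanical (replace $+i$ by $-i$), after which your argument goes through and yields the stated limit.
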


\begin{remark}
Note that as $iZ_k\stackrel{d}{=}Z_k$, we have $X\stackrel{d}{=}\widehat{X}$. Moreover, for real $\alpha,\beta$, the rotation invariance of the law of $Z_k$ implies that 

\begin{equation}
\alpha X+\beta \widehat{X}\stackrel{d}{=}\sqrt{\alpha^{2}+\beta^{2}}X.
\end{equation}

This does not imply that $X$ and $\widehat{X}$ are independent - they are not. For example, formally (one can make this precise if one wishes)

\begin{equation}
\E(X(\theta)\widehat{X}(\theta'))=\sum_{k=1}^\infty \frac{1}{k}\sin(k(\theta-\theta'))
\end{equation}

\noindent which is non-zero unless $|\theta-\theta'|=k\pi$ for some integer $k$. We also point out that (again formally though one can make this too precise with little effort)

\begin{equation}
\E(X(\theta)X(\theta'))=\frac{1}{2}\sum_{k=1}^{\infty}\frac{1}{k}\cos(k(\theta-\theta'))=-\frac{1}{2}\log|e^{i\theta}-e^{i\theta'}|.
\end{equation}
\end{remark}

Motivated by these remarks and Theorem \ref{th:hko}, we expect that in distribution, $f_{n,\alpha,\beta}$ should asymptotically behave like $e^{\sqrt{\alpha^{2}+\beta^{2}}X}$. The following theorem is our main result and makes this statement precise. For a proper definition of the measure $\mu_{\sqrt{\alpha^{2}+\beta^{2}}}(d\theta)$, see the appendix.

\begin{theorem}\label{th:main}
For $\alpha>-\frac{1}{2}$ and $\alpha^{2}+\beta^{2}<2$, the measure $\mu_{n,\alpha,\beta}(d\theta)$ converges in distribution in the space of Radon measures on the unit circle equipped with the topology of weak convergence to the (non-trivial) Gaussian multiplicative chaos measure $\mu_{\sqrt{\alpha^{2}+\beta^{2}}}(d\theta)$ which can be formally written as 

\begin{equation}
\mu_{\sqrt{\alpha^{2}+\beta^{2}}}(d\theta)=e^{\sqrt{\alpha^{2}+\beta^{2}}X(\theta)-\frac{\alpha^{2}+\beta^{2}}{2}\E(X(\theta)^{2})}d\theta.
\end{equation}

\end{theorem}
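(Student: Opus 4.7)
The plan is to realize $\mu_{n,\alpha,\beta}$ as a limit of a smoother approximation obtained by truncating the Fourier series of $\log p_n(\theta)$. Concretely, using the identity $\log p_n(\theta) = -\sum_{k \ge 1} \frac{1}{k} \mathrm{Tr}(U_n^k) e^{-ik\theta}$ (valid, say, off of eigenvalue angles), I would set
\begin{equation*}
X_{n,N}(\theta) = -\mathrm{Re}\sum_{k=1}^{N} \frac{1}{k} \mathrm{Tr}(U_n^k) e^{-ik\theta}, \qquad \widehat{X}_{n,N}(\theta) = -\mathrm{Im}\sum_{k=1}^{N} \frac{1}{k} \mathrm{Tr}(U_n^k) e^{-ik\theta},
\end{equation*}
and define the truncated measure $\mu_{n,\alpha,\beta}^{(N)}(d\theta) = e^{\alpha X_{n,N}(\theta) + \beta \widehat{X}_{n,N}(\theta)}/\mathbb{E}[\,\cdot\,] \, d\theta$. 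By Theorem \ref{th:ds}, for fixed $N$ the joint law of $\{k^{-1/2}\mathrm{Tr}(U_n^k)\}_{k\le N}$ converges to that of independent standard complex Gaussians, so $\mu_{n,\alpha,\beta}^{(N)}$ converges in law (as $n \to \infty$) to the truncated GMC $\mu_{\sqrt{\alpha^2+\beta^2}}^{(N)}$ associated with the Fourier cutoff of the field $X$ in Theorem \ref{th:hko}. Standard GMC theory (and the $L^2$-condition $\alpha^2 + \beta^2 < 2$) then gives $\mu_{\sqrt{\alpha^2+\beta^2}}^{(N)} \to \mu_{\sqrt{\alpha^2+\beta^2}}$ as $N\to\infty$ in the topology of weak convergence of Radon measures.

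It then remains to swap limits, i.e.\ to show that for every continuous $g$ on the unit circle,
\begin{equation*}
\lim_{N\to\infty} \limsup_{n\to\infty} \mathbb{E}\bigl[(\mu_{n,\alpha,\beta}(g) - \mu_{n,\alpha,\beta}^{(N)}(g))^2\bigr] = 0.
\end{equation*}
This is the content of the main technical step and reduces by expansion to controlling a second moment
\begin{equation*}
\mathbb{E}\bigl[f_{n,\alpha,\beta}(\theta) f_{n,\alpha,\beta}(\theta')\bigr] \quad \text{and its analogue with one or two factors replaced by the truncated version.}
\end{equation*}
The first of these is, up to explicit factors, a Toeplitz determinant with two Fisher--Hartwig singularities at $\theta$ and $\theta'$, for which the asymptotics of Deift--Its--Krasovsky and Claeys--Krasovsky give, as $n\to\infty$, a leading term of order $n^{(\alpha^2+\beta^2)/2}\,|e^{i\theta}-e^{i\theta'}|^{-(\alpha^2+\beta^2)}$ times an explicit constant, precisely matching the GMC second moment after normalisation. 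The condition $\alpha^2+\beta^2 < 2$ is exactly what makes this kernel locally integrable, so the paired second moment stays bounded uniformly in $n$; the condition $\alpha > -1/2$ is needed to ensure that the single Fisher--Hartwig symbol is well-defined in the strong Szegő asymptotics used to compute $\mathbb{E}(f_{n,\alpha,\beta}(\theta))$.

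I would then combine the three ingredients: (i) for each $N$ fixed, $\mu_{n,\alpha,\beta}^{(N)}(g) \to \mu_{\sqrt{\alpha^2+\beta^2}}^{(N)}(g)$ in law as $n\to\infty$; (ii) $\mu_{\sqrt{\alpha^2+\beta^2}}^{(N)}(g) \to \mu_{\sqrt{\alpha^2+\beta^2}}(g)$ in $L^2$ as $N\to\infty$ (a standard GMC statement in the $L^2$-phase); (iii) the $L^2$-closeness of $\mu_{n,\alpha,\beta}(g)$ and $\mu_{n,\alpha,\beta}^{(N)}(g)$ uniformly in $n$ as $N\to\infty$. A standard $3\varepsilon$ argument gives $\mu_{n,\alpha,\beta}(g) \to \mu_{\sqrt{\alpha^2+\beta^2}}(g)$ in distribution for each fixed continuous $g$. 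Together with tightness in the space of Radon measures on the circle (which follows from the uniform bound on $\mathbb{E}[\mu_{n,\alpha,\beta}(1)^2]$ via the same second-moment calculation), this upgrades to convergence in law of the measures themselves.

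The main obstacle is step (iii): proving that the truncation error is $L^2$-small requires not only the leading asymptotic of the two-point Toeplitz determinant but also control of the cross term $\mathbb{E}[f_{n,\alpha,\beta}(\theta)\widetilde f_{n,\alpha,\beta}^{(N)}(\theta')]$ in which the two singularities are of different ``type'' (one from $p_n$, one from a truncated exponential of traces). One expects the Deift--Its--Krasovsky machinery to yield asymptotics in which the $|e^{i\theta}-e^{i\theta'}|^{-(\alpha^2+\beta^2)}$ singularity is replaced by an exponential covariance involving the truncated logarithmic kernel $\sum_{k\le N} k^{-1}\cos k(\theta-\theta')$, so that the error vanishes as $N\to\infty$ by dominated convergence on the diagonal, but turning this into a clean quantitative statement valid uniformly in $n$ is the most delicate part of the argument.
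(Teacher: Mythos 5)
Your proposal follows essentially the same route as the paper: truncate the Fourier series of $\log p_n$, use Diaconis--Shahshahani to send the truncated measure to the truncated GMC, and control the $L^2$ truncation error via Heine--Szeg\"o together with the Fisher--Hartwig asymptotics of Deift--Its--Krasovsky and the merging-singularity results of Claeys--Krasovsky (including the cross term with one smooth and one singular symbol, which you correctly single out as the delicate point). One small slip: with the paper's normalization $\E(X(\theta)X(\theta'))=-\tfrac12\log|e^{i\theta}-e^{i\theta'}|$, the normalized two-point function behaves like $|e^{i\theta}-e^{i\theta'}|^{-(\alpha^{2}+\beta^{2})/2}$, not $|e^{i\theta}-e^{i\theta'}|^{-(\alpha^{2}+\beta^{2})}$, which is what makes your stated integrability condition $\alpha^{2}+\beta^{2}<2$ consistent.
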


\bf Strategy of proof: \rm  Our starting point for the proof is the remark that the convergence of $\mu_{n,\alpha,\beta}$ in distribution to $\mu_{\sqrt{\alpha^{2}+\beta^{2}}}$ in the space of Radon measures on the unit circle with the topology of weak convergence is equivalent to 

\begin{equation}
\int_0^{2\pi}g(\theta)\mu_{n,\alpha,\beta}(d\theta)\stackrel{d}{\to}\int_0^{2\pi}g(\theta)\mu_{\sqrt{\alpha^{2}+\beta^{2}}}(d\theta),
\end{equation}

\noindent as $n\to \infty$ for each continuous non-negative function $g$ defined on the unit circle. For details on this, see e.g. Chapter 4 in \cite{kallenberg2}. We prove this by approximating $\mu_{n,\alpha,\beta}$ by truncating the Fourier series of the logarithm of $f_{n,\alpha,\beta}$. More precisely, we note that using the expansion of $\log (1-z)$, we have 

\begin{equation}
\log f_{n,\alpha,\beta}(\theta)\sim -\frac{1}{2}\sum_{j=1}^{\infty}\frac{1}{j}\left((\alpha-\beta i)\mathrm{Tr}U_n^{j}e^{-ij\theta}+(\alpha+\beta i)\mathrm{Tr} U_n^{-j}e^{ij\theta}\right).
\end{equation}

We then approximate $\log f_{n,\alpha,\beta}$ by truncating this series.

\begin{definition}
For $k,n\in \Z_+$, $\alpha,\beta\in \R$ and $\theta\in [0,2\pi)$, let 

\begin{equation}
f_{n,\alpha,\beta}^{(k)}(\theta)=e^{ -\frac{1}{2}\sum_{j=1}^{k}\frac{1}{j}\left((\alpha-\beta i)\mathrm{Tr}U_n^{j}e^{-ij\theta}+(\alpha+\beta i)\mathrm{Tr} U_n^{-j}e^{ij\theta}\right)}
\end{equation}

\noindent and 

\begin{equation}
\mu_{n,\alpha,\beta}^{(k)}(d\theta)=\frac{f_{n,\alpha,\beta}^{(k)}(\theta)}{\E(f_{n,\alpha,\beta}^{(k)}(\theta))}d\theta.
\end{equation}

\end{definition}

The idea now is to show that for any fixed continuous function $g$, as we let $n\to\infty$ and then $k\to \infty$, 

\begin{equation}
\int_0^{2\pi}g(\theta)\mu_{n,\alpha,\beta}^{(k)}(d\theta)-\int_0^{2\pi}g(\theta)\mu_{n,\alpha,\beta}(d\theta)
\end{equation}

\noindent tends to zero in distribution while in the same limit, $\int_0^{2\pi}g(\theta)\mu_{n,\alpha,\beta}^{(k)}(d\theta)$ tends to $\int_0^{2\pi}g(\theta)\mu_{\sqrt{\alpha^{2}+\beta^{2}}}(d\theta)$ in distribution. The first fact will be established through a variance estimate in the next section, where we make use of a Toeplitz determinant representation and results of \cite{deift,claeyskrasovsky}. The second fact follows from Theorem \ref{th:ds} and the definition of $\mu_{\sqrt{\alpha^{2}+\beta^{2}}}$.

\vspace{0.3cm}

Finally we note that it is reasonable to expect that the restriction in the values of the parameters $\alpha$ and $\beta$ is simply due to the method of our proof and convergence will hold for a larger set of values. For further discussion, see the last section of this paper.

\section{Variance estimates and asymptotics of Toeplitz determinants with Fisher-Hartwig singularities}

The goal of this section is to prove the following result:

\begin{proposition}\label{prop:var}

For $\alpha>-\frac{1}{2}$ and $\beta\in \R$ such that $\alpha^{2}+\beta^{2}<2$,

\begin{equation}
\lim_{k\to\infty}\limsup_{n\to\infty}\E\left(\left(\int_0^{2\pi}g(\theta)\mu_{n,\alpha,\beta}^{(k)}(d\theta)-\int_0^{2\pi}g(\theta)\mu_{n,\alpha,\beta}(d\theta)\right)^{2}\right)=0
\end{equation}

\noindent for any given continuous non-negative function $g$ defined on the unit circle.

\end{proposition}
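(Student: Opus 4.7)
The plan is to expand the squared expectation into a double integral of two-point kernels and identify a common limit, in the iterated limit $n\to\infty$ followed by $k\to\infty$, for each kernel. Writing out the square and applying Fubini gives
\[
\E\!\left[\!\left(\int_0^{2\pi}\!g\, d\mu_{n,\alpha,\beta}^{(k)}-\int_0^{2\pi}\!g\, d\mu_{n,\alpha,\beta}\right)^{\!2}\right]
=\int_0^{2\pi}\!\!\!\int_0^{2\pi}\!g(\theta)g(\theta')\bigl[K_n^{(k)}+K_n-2K_n^{\mathrm{cross}}\bigr](\theta,\theta')\, d\theta\, d\theta',
\]
where
\[
K_n(\theta,\theta') = \frac{\E[f_{n,\alpha,\beta}(\theta)f_{n,\alpha,\beta}(\theta')]}{\E[f_{n,\alpha,\beta}(\theta)]\,\E[f_{n,\alpha,\beta}(\theta')]},
\]
and $K_n^{(k)}$, $K_n^{\mathrm{cross}}$ are defined analogously with one or both factors in the numerator replaced by $f_{n,\alpha,\beta}^{(k)}$. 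It then suffices to show that all three kernels converge (off the diagonal) to $|e^{i\theta}-e^{i\theta'}|^{-(\alpha^2+\beta^2)/2}$ in the iterated limit, and that they are dominated uniformly in $(n,k)$ by a single integrable majorant so that the limit can be passed through the double integral against the bounded weight $g(\theta)g(\theta')$.

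For the full kernel $K_n$, one recognizes $\E[f_{n,\alpha,\beta}(\theta)f_{n,\alpha,\beta}(\theta')]$ as a Toeplitz determinant whose symbol carries two Fisher--Hartwig singularities at $e^{i\theta}$ and $e^{i\theta'}$, with parameters determined by $(\alpha,\beta)$. The restriction $\alpha>-\frac12$ is exactly the regularity condition on the real part of the Fisher--Hartwig parameter that allows the strong asymptotics of \cite{deift} to be invoked when $|\theta-\theta'|$ stays macroscopically large, and the uniform merging asymptotics of \cite{claeyskrasovsky} to be invoked when $|\theta-\theta'|$ collapses to the microscopic scale $1/n$. Combining these two regimes, and dividing by the corresponding single-singularity (Keating--Snaith-type) asymptotics of the two factors in the denominator, yields $K_n(\theta,\theta')\to|e^{i\theta}-e^{i\theta'}|^{-(\alpha^2+\beta^2)/2}$ as $n\to\infty$, together with an upper bound of the same functional form valid for all sufficiently large $n$.

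For the truncated kernel $K_n^{(k)}$, observe that $\log f_{n,\alpha,\beta}^{(k)}$ is a fixed linear combination of the traces $\mathrm{Tr}\,U_n^j$ for $1\le j\le k$. Theorem~\ref{th:ds} gives joint convergence of $(\mathrm{Tr}\,U_n^j/\sqrt{j})_{j=1}^{k}$ to i.i.d.\ standard complex Gaussians, and standard moment bounds on these traces lift this to convergence of the relevant Laplace transforms. A direct Gaussian computation then gives
\[
K_n^{(k)}(\theta,\theta')\xrightarrow{n\to\infty}\exp\!\Bigl(\tfrac{\alpha^2+\beta^2}{2}\sum_{j=1}^{k}\tfrac{\cos(j(\theta-\theta'))}{j}\Bigr),
\]
which increases to $|e^{i\theta}-e^{i\theta'}|^{-(\alpha^2+\beta^2)/2}$ as $k\to\infty$. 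For the cross kernel $K_n^{\mathrm{cross}}$, the expectation $\E[f_{n,\alpha,\beta}^{(k)}(\theta)f_{n,\alpha,\beta}(\theta')]$ is again a Toeplitz determinant, this time with a symbol that is a single Fisher--Hartwig singularity at $e^{i\theta'}$ multiplied by a smooth factor encoding the first $k$ power sums at $\theta$. The same asymptotic machinery, combined with the explicit Gaussian expectation for the smooth factor, identifies the iterated-limit kernel as $|e^{i\theta}-e^{i\theta'}|^{-(\alpha^2+\beta^2)/2}$ as well.

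The main obstacle is twofold. First, the Toeplitz asymptotics must be uniform all the way down to the microscopic merging regime $|\theta-\theta'|\asymp 1/n$, which is precisely the content supplied by the Claeys--Krasovsky result; extracting from it a single pointwise upper bound of the form $C|e^{i\theta}-e^{i\theta'}|^{-(\alpha^2+\beta^2)/2}$ valid simultaneously for all $(n,k,\theta,\theta')$ is where the bulk of the Toeplitz-analytic input is used. Second, the dominating function must itself be integrable on $[0,2\pi)^2$, and since $|e^{i\theta}-e^{i\theta'}|^{-(\alpha^2+\beta^2)/2}$ is locally integrable across the diagonal exactly when $\alpha^2+\beta^2<2$, this restriction on the parameters is dictated by the method and marks the familiar $L^2$-regime for Gaussian multiplicative chaos.
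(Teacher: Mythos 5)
Your overall architecture is the same as the paper's: expand the square, recognize the three two-point functions as Toeplitz determinants (one smooth symbol, one with a single Fisher--Hartwig singularity times a smooth factor, one with two merging Fisher--Hartwig singularities), feed in Szeg\H{o}, Deift--Its--Krasovsky and Claeys--Krasovsky, and observe that all three normalized kernels share the limit $|e^{i\theta}-e^{i\theta'}|^{-(\alpha^2+\beta^2)/2}$ so that the combination $K_n^{(k)}+K_n-2K_n^{\mathrm{cross}}$ integrates to zero. The only genuine organizational difference is that you propose a single dominated-convergence pass with a global majorant $C|e^{i\theta}-e^{i\theta'}|^{-(\alpha^2+\beta^2)/2}$ uniform in $n$, whereas the paper splits the $\sigma_3$ integral into four regions ($|\theta-\theta'|\le 1/n$, up to $\log n/n$, up to $2t_0$, and beyond) and estimates each separately; your version is workable but the uniform upper bound down to the merging scale is exactly the content that has to be extracted from Claeys--Krasovsky, so nothing is saved.

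There is, however, one step that is wrong as stated. You claim that
\begin{equation*}
\exp\Bigl(\tfrac{\alpha^2+\beta^2}{2}\sum_{j=1}^{k}\tfrac{\cos(j(\theta-\theta'))}{j}\Bigr)
\end{equation*}
\emph{increases} to $|e^{i\theta}-e^{i\theta'}|^{-(\alpha^2+\beta^2)/2}$ as $k\to\infty$. It does not: the partial sums of $\sum_j \cos(jx)/j$ are not monotone in $k$ (at $x=\pi$ they are the alternating partial sums of $-\log 2$), so you cannot invoke monotone convergence to move the $k\to\infty$ limit through the $\theta,\theta'$ integral. This interchange is the last nontrivial point of the proof. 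You would need either a uniform-in-$k$ integrable majorant, i.e.\ the classical bound $\sum_{j=1}^{k}\cos(jx)/j\le -\log|2\sin(x/2)|+C$ (true, but requiring an Abel-summation argument you do not supply), or the paper's shortcut: after the $n\to\infty$ limit the whole expression equals $\int g(\theta)g(\theta')\bigl(|e^{i\theta}-e^{i\theta'}|^{-(\alpha^2+\beta^2)/2}-e^{\frac{\alpha^2+\beta^2}{2}\sum_{j\le k}\cos(j(\theta-\theta'))/j}\bigr)\,d\theta\,d\theta'$, which is nonnegative for every $k$ because it is a limit of variances, and Fatou applied to the pointwise a.e.\ convergent integrand then forces it to vanish as $k\to\infty$. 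Please replace the monotonicity claim with one of these arguments; everything else in your outline matches the paper's proof.
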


Much of this section will be well known to experts of random matrix theory, but we give a detailed presentation for the benefit of readers less familiar with it.

\vspace{0.3cm}

Expanding the square in the expectation and using Fubini, we see that what is relevant is obtaining uniform asymptotics for $\E(f_{n,\alpha,\beta}^{(k)}(\theta)f_{n,\alpha,\beta}^{(k)}(\theta'))$, $\E(f_{n,\alpha,\beta}^{(k)}(\theta)f_{n,\alpha,\beta}(\theta'))$, and $\E(f_{n,\alpha,\beta}(\theta)f_{n,\alpha,\beta}(\theta'))$, as well as $\E(f_{n,\alpha,\beta}^{(k)}(\theta))$ and $\E(f_{n,\alpha,\beta}(\theta))$ for all values of $\theta$ and $\theta'$ (even as $\theta\to\theta'$). As we will see, all of these quantities can be represented as Toeplitz determinants and their asymptotic behavior follows from existing work. To see the Toeplitz determinant representation, let us first recall the Heine-Szeg\"o identity (see e.g. \cite{bd}).

\begin{theorem}[Heine-Szeg\"o identity]\label{th:hs}
Consider a function defined on the unit circle: $f(\phi)=\sum_{n\in \Z}f_n e^{in\phi}$ which is in $L^{1}(d\phi)$. Then if $(e^{i\theta_k})_{k=1}^{n}$ are the eigenvalues of a Haar distributed $n\times n$ unitary matrix, then 

\begin{equation}
\E\left(\prod_{k=1}^{n}f\left(\theta_k\right)\right)=D_{n-1}(f),
\end{equation}

\noindent where the Toeplitz determinant $D_{n-1}(f)$ is the determinant of the matrix 

\begin{equation}
\left(\begin{array}{cccc}
f_0 & f_1 & \cdots & f_{n-1}\\
f_{-1} & f_0 & \cdots & f_{n-2}\\
\vdots & \vdots & \ddots & \vdots \\
f_{-n+1} & f_{-n+2} & \cdots & f_0
\end{array}\right).
\end{equation}

\end{theorem}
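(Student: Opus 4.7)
The plan is to derive the Heine--Szegő identity from the Weyl integration formula \eqref{eq:cuelaw} combined with Andreief's (Cauchy--Binet) integration identity. This is the classical route, and the argument has essentially three steps: rewrite the squared Vandermonde as a product of conjugate determinants, apply Andreief to factor the $n$-dimensional integral into a single determinant, and recognize the resulting entries as Fourier coefficients of $f$.

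First, by \eqref{eq:cuelaw}, the expectation equals
\begin{equation*}
\frac{1}{n!}\int_{[0,2\pi)^n} \prod_{k<j}|e^{i\theta_k}-e^{i\theta_j}|^{2}\prod_{k=1}^n f(\theta_k)\,\frac{d\theta_k}{2\pi};
\end{equation*}
the integrand lies in $L^1$ since the Vandermonde factor is bounded by $4^{n(n-1)/2}$ on the torus and $f\in L^1(d\phi)$ by assumption, so Fubini is available throughout. Using the standard Vandermonde identity
\begin{equation*}
\prod_{k<j}(e^{i\theta_j}-e^{i\theta_k}) = \det\bigl(e^{i(\ell-1)\theta_k}\bigr)_{\ell,k=1}^n
\end{equation*}
and taking moduli squared, the integrand becomes a product of two conjugate Vandermonde determinants times $\prod_k f(\theta_k)$. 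I would then invoke Andreief's identity, which states that for suitable $\phi_\ell,\psi_\ell$ and measure $d\mu$,
\begin{equation*}
\frac{1}{n!}\int \det(\phi_\ell(x_k))\det(\psi_\ell(x_k))\prod_k d\mu(x_k) = \det\left(\int \phi_a(x)\psi_b(x)\,d\mu(x)\right)_{a,b=1}^n.
\end{equation*}
Applying this with $\phi_\ell(\theta)=e^{i(\ell-1)\theta}$, $\psi_\ell(\theta)=e^{-i(\ell-1)\theta}$, and $d\mu(\theta)=f(\theta)d\theta/(2\pi)$, the $(a,b)$-entry is $\int_0^{2\pi} e^{i(a-b)\theta}f(\theta)\,d\theta/(2\pi)=f_{b-a}$, so the expectation equals $\det(f_{b-a})_{a,b=1}^n$; since $\det(M)=\det(M^T)$, this matrix coincides with the Toeplitz matrix displayed in the statement, giving $D_{n-1}(f)$.

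For completeness I would verify Andreief's identity itself by expanding each determinant via the Leibniz formula, exchanging the finite permutation sums with the integral, factorizing the resulting one-dimensional integrals, and reorganizing the $(n!)^2$ terms via the substitution $\tau \mapsto \sigma\circ\tau$ to collapse one permutation sum into the factor $n!$ and recover a single Leibniz expansion of the right-hand side. I do not foresee a serious obstacle here: every step is elementary, and the integrability hypothesis $f\in L^1$ is precisely what is needed for Fubini to apply. The only point requiring a little care is the index bookkeeping between the Fourier convention $f(\phi)=\sum_n f_n e^{in\phi}$ and the explicit layout of $D_{n-1}(f)$ given in the statement, which is resolved by a single application of $\det(M)=\det(M^T)$.
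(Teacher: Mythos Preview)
Your argument is correct and is the classical derivation of the Heine--Szeg\H{o} identity via Andreief's formula. Note, however, that the paper does not prove this theorem at all: it is stated as a known result with a reference to \cite{bd}, and is used as a black box throughout. So there is no ``paper's own proof'' to compare against; your write-up simply supplies the standard proof that the paper omits.

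One small remark on bookkeeping: with the Fourier convention $f(\phi)=\sum_m f_m e^{im\phi}$ (so that $f_m=\int_0^{2\pi}f(\phi)e^{-im\phi}\,d\phi/(2\pi)$), your Andreief computation already gives the $(a,b)$-entry as $f_{b-a}$, which is exactly the displayed Toeplitz matrix. The appeal to $\det(M)=\det(M^T)$ is therefore unnecessary, though of course harmless.
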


\begin{remark}\label{rem:ti}
It follows for example from the translation invariance of the law of $(\theta_i)_{i=1}^{n}$, that for any fixed $\theta$, one also has

\begin{equation}
\E\left(\prod_{k=1}^{n}f\left(\theta+\theta_k\right)\right)=D_{n-1}(f)
\end{equation}

\noindent or if we denote by $f_\theta$, the translation of $f$ by $\theta$: $f_\theta(\phi)=f(\theta+\phi)$, then $D_{n-1}(f_\theta)=D_{n-1}(f)$.

\end{remark}

The following fact is a direct consequence of Theorem \ref{th:hs}:

\begin{lemma}\label{le:var}
\begin{align}
\notag \E&\left(\left(\int_0^{2\pi}g(\theta)\mu_{n,\alpha,\beta}^{(k)}(d\theta)-\int_0^{2\pi}g(\theta)\mu_{n,\alpha,\beta}(d\theta)\right)^{2}\right)\\
&= \frac{1}{\left(\E(f_{n,\alpha,\beta}^{(k)}(0))\right)^{2}}\int_0^{2\pi}\int_0^{2\pi}g(\theta)g(\theta')D_{n-1}(\sigma_{1,\theta,\theta'})d\theta d\theta'\\
&\qquad -2\frac{1}{\E(f_{n,\alpha,\beta}^{(k)}(0))\E(f_{n,\alpha,\beta}(0))} \int_0^{2\pi}\int_0^{2\pi}g(\theta)g(\theta')D_{n-1}(\sigma_{2,\theta,\theta'})d\theta d\theta'\notag\\
&\qquad +\frac{1}{\left(\E(f_{n,\alpha,\beta}(0))\right)^{2}}\int_0^{2\pi}\int_0^{2\pi}g(\theta)g(\theta')D_{n-1}(\sigma_{3,\theta,\theta'})d\theta d\theta',\notag
\end{align}

\noindent where 

\begin{equation}
\sigma_{1,\theta,\theta'}(\phi)=e^{-\frac{1}{2}\sum_{j=1}^{k}\frac{1}{j}\left((\alpha-\beta i)(e^{-ij\theta}+e^{-ij\theta'})e^{ij\phi}+(\alpha+\beta i)(e^{ij\theta}+e^{ij\theta'})e^{-ij\phi}\right)},
\end{equation}

\begin{align}
\notag \sigma_{2,\theta,\theta'}(\phi)&=e^{-\frac{1}{2}\sum_{j=1}^{k}\frac{1}{j}\left((\alpha-\beta i)e^{-ij\theta}e^{ij\phi}+(\alpha+\beta i)e^{ij\theta}e^{-ij\phi}\right)}\\
&\qquad\times|e^{i\theta'}-e^{i\phi}|^{\alpha}e^{\beta \mathrm{Im}\log (1-e^{i(\phi-\theta')})}
\end{align}

\noindent and 

\begin{equation}
\sigma_{3,\theta,\theta'}(\phi)=|e^{i\theta}-e^{i\phi}|^{\alpha}e^{\beta \mathrm{Im}\log (1-e^{i(\phi-\theta)})}|e^{i\theta'}-e^{i\phi}|^{\alpha}e^{\beta \mathrm{Im}\log (1-e^{i(\phi-\theta')})},
\end{equation}

\noindent where the branch of the logarithm is such that $\mathrm{Im}\log (1-e^{i(\phi-\theta')})\in(-\frac{\pi}{2},\frac{\pi}{2}]$ and similarly for $\theta$.

\end{lemma}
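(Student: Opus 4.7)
The plan is to prove Lemma \ref{le:var} by a direct unwinding of the three terms that appear after expanding the square, rewriting each random factor as a product over the eigenangles, and then invoking the Heine-Szegő identity (Theorem \ref{th:hs}) on each resulting product. The key observation is that both $f_{n,\alpha,\beta}^{(k)}(\theta)$ and $f_{n,\alpha,\beta}(\theta)$ are, by definition, multiplicative in the eigenangles. Indeed, since $\mathrm{Tr}\, U_n^{\pm j} = \sum_\ell e^{\pm i j \theta_\ell}$, one has the factorizations
\begin{equation}
f_{n,\alpha,\beta}^{(k)}(\theta) = \prod_{\ell=1}^n h_\theta^{(k)}(\theta_\ell),\qquad f_{n,\alpha,\beta}(\theta) = \prod_{\ell=1}^n g_\theta(\theta_\ell),
\end{equation}
where
\begin{equation}
h_\theta^{(k)}(\phi) = e^{-\tfrac{1}{2}\sum_{j=1}^k \tfrac{1}{j}\left((\alpha-\beta i) e^{-ij\theta} e^{ij\phi} + (\alpha+\beta i) e^{ij\theta} e^{-ij\phi}\right)}
\end{equation}
and
\begin{equation}
g_\theta(\phi) = |e^{i\theta}-e^{i\phi}|^{\alpha}\, e^{\beta \, \mathrm{Im}\log(1-e^{i(\phi-\theta)})},
\end{equation}
with the same branch convention as in Definition \ref{def:main}.

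The second step is to expand the square in the left-hand side of the claimed identity and apply Fubini to exchange the expectation with the double integral over $(\theta,\theta')$. This produces three cross terms. In each of them, the integrand involves an expectation of a product of two terms of the form $f^{(k)}$ or $f$; by the factorizations above this reduces to expectations of $\prod_\ell \sigma_{i,\theta,\theta'}(\theta_\ell)$ for $i=1,2,3$, with
\begin{equation}
\sigma_{1,\theta,\theta'} = h_\theta^{(k)} h_{\theta'}^{(k)},\qquad \sigma_{2,\theta,\theta'} = h_\theta^{(k)} g_{\theta'},\qquad \sigma_{3,\theta,\theta'} = g_\theta g_{\theta'},
\end{equation}
which, after the obvious algebraic simplification (combining the two exponential sums for $\sigma_1$ and juxtaposing the two absolute-value/argument factors for $\sigma_3$), agree with the symbols written in the statement of the lemma. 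Theorem \ref{th:hs} then converts each such expectation into $D_{n-1}(\sigma_{i,\theta,\theta'})$, giving the three integrals appearing on the right-hand side.

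The last step is to handle the normalizing constants. By the translation invariance of the law of $(\theta_1,\ldots,\theta_n)$ (Remark \ref{rem:ti}), $\E(f_{n,\alpha,\beta}^{(k)}(\theta))$ and $\E(f_{n,\alpha,\beta}(\theta))$ are independent of $\theta$ and therefore equal to $\E(f_{n,\alpha,\beta}^{(k)}(0))$ and $\E(f_{n,\alpha,\beta}(0))$ respectively. Consequently the three normalizing factors can be pulled out of the double integrals and combined in the form written in the lemma.

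There is essentially no analytic obstacle here: the entire argument is a combinatorial rearrangement coupled with the Heine-Szegő identity, and the only point demanding a little care is to verify that the branch of $\mathrm{Im}\log(1-e^{i(\phi-\theta)})$ specified in Definition \ref{def:main} is the same one used in the definition of $\sigma_{2,\theta,\theta'}$ and $\sigma_{3,\theta,\theta'}$, and to check that the symbols $\sigma_{i,\theta,\theta'}$ actually lie in $L^1(d\phi)$ so that Theorem \ref{th:hs} applies — this requires $\alpha>-\tfrac{1}{2}$, exactly the hypothesis of the proposition. The real work lies not in establishing this lemma but in extracting the uniform asymptotics of the three Toeplitz determinants as $n\to\infty$ via the Fisher-Hartwig results of \cite{deift,claeyskrasovsky}, which is postponed to the subsequent part of the section.
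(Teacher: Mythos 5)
Your proposal is correct and follows essentially the same route as the paper: expand the square, use the multiplicativity of $f_{n,\alpha,\beta}^{(k)}$ and $f_{n,\alpha,\beta}$ in the eigenangles to write each cross term as $\E\bigl(\prod_{p}\sigma_{i,\theta,\theta'}(\theta_p)\bigr)$, apply the Heine--Szeg\H{o} identity, and invoke Remark \ref{rem:ti} to pull the $\theta$-independent normalizations out of the integrals. Your added remarks on the branch convention and the $L^{1}$ integrability of the symbols are sensible points of care that the paper leaves implicit.
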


\begin{proof}
This follows from applying Theorem \ref{th:hs} to the remark that for any $\theta,\theta'\in[0,2\pi)$

\begin{equation}
f_{n,\alpha,\beta}^{(k)}(\theta)f_{n,\alpha,\beta}^{(k)}(\theta')=\prod_{p=1}^{n}\sigma_{1,\theta,\theta'}(\theta_p)
\end{equation}

\noindent and similar arguments for $f_{n,\alpha,\beta}^{(k)}(\theta)f_{n,\alpha,\beta}(\theta')$ and $f_{n,\alpha,\beta}(\theta)f_{n,\alpha,\beta}(\theta')$. Remark \ref{rem:ti} implies that the denominators $\E(f_{n,\alpha,\beta}^{(k)}(\theta))$ are independent of $\theta$ and can be taken outside of the integrals.
\end{proof}

\begin{remark}
Due to our choice of the branch of the logarithm, we have

\begin{equation}
\mathrm{Im}\log (1-e^{i(\phi-\theta)})=\begin{cases}
-\frac{\pi}{2}+\frac{\phi-\theta}{2}, & 0\leq \theta\leq \phi<2\pi\\
\frac{\pi}{2}+\frac{\phi-\theta}{2}, & 0\leq \phi<\theta<2\pi
\end{cases}
\end{equation}

\noindent implying that we can write

\begin{align}
\notag\sigma_{2,\theta,\theta'}(\phi)&=e^{-\frac{1}{2}\sum_{j=1}^{k}\frac{1}{j}\left((\alpha-\beta i)e^{-ij\theta}e^{ij\phi}+(\alpha+\beta i)e^{ij\theta}e^{-ij\phi}\right)}\\
&\qquad\times|e^{i\theta'}-e^{i\phi}|^{\alpha}e^{\beta\frac{\phi-\theta'}{2}}g_{e^{i\theta'},-i\frac{\beta}{2}}(e^{i\phi})
\end{align}

\noindent and 

\begin{equation}
\sigma_{3,\theta,\theta'}(\phi)=|e^{i\theta}-e^{i\phi}|^{\alpha}e^{\beta\frac{\phi-\theta}{2}}g_{e^{i\theta},-i\frac{\beta}{2}}(e^{i\phi})|e^{i\theta'}-e^{i\phi}|^{\alpha}e^{\beta\frac{\phi-\theta'}{2}}g_{e^{i\theta'},-i\frac{\beta}{2}}(e^{i\phi}),
\end{equation}

\noindent where 

\begin{equation}\label{def:g}
g_{e^{i\theta},\beta}(e^{i\phi})=\begin{cases}
e^{i\pi \beta}, & 0\leq \phi<\theta\\
e^{-i\pi \beta}, &\theta\leq \phi< 2\pi
\end{cases}.
\end{equation}
\end{remark}

\noindent In the definition of $g_{e^{i\theta},\beta}$ we have followed the notation of \cite{deift} to avoid confusion when referring to their results. 

\vspace{0.3cm}

The asymptotics of such Toeplitz determinants have been studied extensively. The pointwise asymptotics of $D_{n-1}(\sigma_{1,\theta,\theta'})$ go back to Szeg\"o \cite{sze}. $D_{n-1}(\sigma_{2,\theta,\theta'})$ and $D_{n-1}(\sigma_{3,\theta,\theta'})$ are special cases of Toeplitz determinants with Fisher-Hartwig singularities. Conjectures about their asymptotic behavior go back to Fisher and Hartwig \cite{fh} as well as Lenard \cite{lenard}. The first rigorous results are due to Widom \cite{widom} though there is still a lot of research activity related to the problem (see e.g. \cite{ehrhardt1,deift,deift2,claeyskrasovsky}). Let us now discuss the asymptotics of the different terms.

\subsection{Asymptotics of $D_{n-1}(\sigma_1)$}

As noted, the pointwise asymptotics of such a determinant go back to Szeg\"o:

\begin{theorem}[Strong Szeg\"o theorem]\label{th:sz}
Let $L$ be a real valued function on the unit circle such that $L\in L^{1}$, $e^{L}\in L^{1}$ and let $\widehat{L}_k$ denote the Fourier coefficients of $L$: $\widehat{L}_n=\int_0^{2\pi}e^{-in\phi}L(\phi)\frac{d\phi}{2\pi}$. Then 

\begin{equation}
\log D_{n-1}(e^{L})= n \widehat{L}_0+\sum_{k=1}^{\infty}k|\widehat{L}_k|^{2}+\mathit{o}(1).
\end{equation} 
\end{theorem}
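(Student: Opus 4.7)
The plan is to reduce the Toeplitz determinant to an expectation over CUE eigenvalues via the Heine--Szeg\"o identity (Theorem \ref{th:hs}), and then exploit the Gaussian limit of traces from Theorem \ref{th:ds}. First, Heine--Szeg\"o gives
\begin{equation*}
D_{n-1}(e^L) = \E\exp\Bigl(\sum_{k=1}^n L(\theta_k)\Bigr),
\end{equation*}
where $(e^{i\theta_k})$ are the CUE eigenvalues. Expanding $L$ in its Fourier series, isolating the constant term ($\sum_k 1 = n$), and using $\sum_k e^{ij\theta_k} = \mathrm{Tr}\, U_n^j$ for $j\neq 0$, together with $\widehat{L}_{-j}=\overline{\widehat{L}_j}$ (reality of $L$), yields
\begin{equation*}
\sum_{k=1}^n L(\theta_k) = n\widehat{L}_0 + \sum_{j=1}^\infty \bigl(\widehat{L}_j\,\mathrm{Tr}\, U_n^j + \overline{\widehat{L}_j}\,\mathrm{Tr}\, U_n^{-j}\bigr).
\end{equation*}
The deterministic term $n\widehat{L}_0$ accounts for the leading behavior, so the theorem reduces to showing that the Laplace transform of the random part converges to $\exp\bigl(\sum_{j\geq 1} j|\widehat{L}_j|^2\bigr)$.

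Next I would handle the case where $L=L_m$ is a trigonometric polynomial of degree $m$, so the sum over $j$ is finite. Here Theorem \ref{th:ds} gives the joint weak convergence of $(\mathrm{Tr}\, U_n^j)_{j=1}^m$ to $(\sqrt{j}Z_j)_{j=1}^m$ with independent standard complex Gaussians $Z_j$. To upgrade weak convergence to convergence of exponential moments I would invoke the well-known strengthening of Theorem \ref{th:ds} asserting that the joint moments of these traces agree \emph{exactly} with the Gaussian moments once their total degree is below $n$; this provides enough uniform integrability to sum the Taylor expansion of the exponential term-by-term. The standard Gaussian computation $\E e^{cZ+\bar c\bar Z}=e^{|c|^2}$, applied with $c=\sqrt{j}\widehat{L}_j$, then produces the limit $\exp\bigl(\sum_{j=1}^m j|\widehat{L_m}_j|^2\bigr)$, which is the statement of the theorem for $L=L_m$.

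Finally, to pass from trigonometric polynomials to general $L$ (assuming implicitly $\sum_j j|\widehat{L}_j|^2<\infty$, as otherwise the right-hand side is meaningless), I would approximate $L=L_m+R_m$ where $L_m$ is the truncation to frequencies $|j|\leq m$. The right-hand side is stable as $m\to\infty$ by dominated convergence on $\ell^2(\N,j\,dj)$. The main obstacle is matching stability for the left-hand side \emph{uniformly in $n$}: one must show $|\log D_{n-1}(e^L)-\log D_{n-1}(e^{L_m})|\to 0$ as $m\to\infty$ with no dependence on $n$. My plan is to interpolate the symbol continuously between $L_m$ and $L$ and integrate the logarithmic derivative, which by Szeg\"o's variational formula can be expressed through the resolvent of the associated Toeplitz operator; bounding this derivative by the $H^{1/2}$-norm of the perturbation $R_m$ (rather than by $L^\infty$ or $L^1$ norms, which would be far too weak) is precisely the analytic heart of the theorem. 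Establishing that bound is where the classical arguments of Szeg\"o, Kac, and Widom concentrate their technical effort, and is the step I would expect to be most delicate.
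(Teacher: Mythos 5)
The paper does not prove this statement at all: it is quoted as the classical strong Szeg\H{o} theorem, with \cite{sze} and \cite{simon} as references. So the comparison here is between your sketch and the known proofs. Your route --- Heine--Szeg\H{o} to turn $D_{n-1}(e^L)$ into $\E\exp\bigl(n\widehat L_0+\sum_{j\ge1}(\widehat L_j\mathrm{Tr}\,U_n^j+\overline{\widehat L_j}\mathrm{Tr}\,U_n^{-j})\bigr)$ and then use the Gaussian behaviour of the traces --- is a legitimate and historically important one (it is essentially Johansson's random-matrix proof of strong Szeg\H{o}), and your Fourier bookkeeping and the Gaussian computation $\E e^{cZ+\bar c\bar Z}=e^{|c|^2}$ are correct. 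But there are two genuine gaps.

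First, in the trigonometric-polynomial case the passage from weak convergence of the traces to convergence of $\E e^{P_n}$ is not justified by the exact moment identity alone. The Diaconis--Shahshahani moment formula matches Gaussian moments only up to total degree of order $n/m$, and the remaining tail of the Taylor series of the exponential is not obviously negligible: the only a priori bound on the traces is $|\mathrm{Tr}\,U_n^j|\le n$, which gives $\E|P_n|^r\le (Cn)^r$, and $\sum_{r>n/m}(Cn)^r/r!$ does not tend to zero. So "enough uniform integrability to sum term-by-term" is precisely the claim that needs proof. The standard way to close this within your framework is to combine the Fatou/portmanteau lower bound $\liminf_n\E e^{P_n}\ge \E e^{P_\infty}$ (valid since $e^{P_n}\ge0$) with the sharp upper bound $D_{n-1}(e^{L})\le\exp\bigl(n\widehat L_0+\sum_{k\ge1}k|\widehat L_k|^2\bigr)$, valid for all $n$; that inequality (the "sharp form" in \cite{simon}) is itself a nontrivial input you would need to import or prove. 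Second, the truncation step --- showing $\log D_{n-1}(e^L)-\log D_{n-1}(e^{L_m})\to0$ uniformly in $n$ as $m\to\infty$, controlled by the $H^{1/2}$-norm of the tail --- is, as you say yourself, the analytic heart of the theorem, and your proposal only names the strategy without carrying it out. As written, then, the proposal is an outline of a known proof with its two hardest steps left open rather than a proof.
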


As it is, this is not quite enough for us. In our case, $L$ depends on the variables $\theta$ and $\theta'$ which we wish to integrate over so we need a uniform version for this. Actually as $\sigma_{1,\theta,\theta'}$ is real and $\int_0^{2\pi}\log \sigma_{1,\theta,\theta'}(\phi)d\phi=0$, $D_{n-1}(\sigma_1)$ is increasing in $n$: 

\begin{theorem}
Let $L$ be as in the previous theorem with the extra condition that $\widehat{L}_0=0$. Then for any $n\in \Z_+$, 
\begin{equation}
D_{n-1}(e^{L})\leq D_{n}(e^{L}).
\end{equation}
\end{theorem}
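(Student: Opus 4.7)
The plan is to interpret the ratio $D_n(e^L)/D_{n-1}(e^L)$ as a squared $L^2$-distance and then bound it below using Jensen's formula together with Jensen's inequality. The standard Gram determinant identity for orthogonal polynomials on the unit circle yields
\[
\frac{D_n(e^L)}{D_{n-1}(e^L)}\ =\ \min_{q\text{ monic, }\deg q=n}\int_0^{2\pi}|q(e^{i\phi})|^2\, e^{L(\phi)}\frac{d\phi}{2\pi},
\]
because the right-hand side is the squared distance from $z^n$ to $\mathrm{span}\{1, z, \dots, z^{n-1}\}$ in $L^2(e^L d\phi/(2\pi))$. Thus it suffices to show $\int |q|^2 e^L\, d\phi/(2\pi) \geq 1$ for every monic polynomial $q$ of degree $n$.

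Fix such a $q(z)=\prod_{j=1}^n(z-z_j)$. Jensen's formula from complex analysis, applied to the monic polynomial $q$, gives
\[
\int_0^{2\pi}\log|q(e^{i\phi})|\frac{d\phi}{2\pi}\ =\ \sum_{j\,:\,|z_j|\geq 1}\log|z_j|\ \geq\ 0.
\]
Since $d\phi/(2\pi)$ is a probability measure and $\log$ is concave, Jensen's inequality then gives
\[
\log\int_0^{2\pi}|q(e^{i\phi})|^2 e^{L(\phi)}\frac{d\phi}{2\pi}\ \geq\ 2\int_0^{2\pi}\log|q(e^{i\phi})|\frac{d\phi}{2\pi}\, +\, \widehat{L}_0\ \geq\ 0,
\]
using the hypothesis $\widehat{L}_0 = 0$. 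Exponentiating and combining with the variational formula yields $D_n(e^L)\geq D_{n-1}(e^L)$.

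The only technicality is checking that Jensen's inequality is applicable, i.e., that both sides are finite real numbers. The left-hand integral $\int |q|^2 e^L d\phi/(2\pi)$ is finite because $|q|$ is bounded on the compact set $\{|z|=1\}$ and $e^L\in L^1$ by hypothesis; the integrand $\log|q(e^{i\phi})|^2 + L(\phi)$ lies in $L^1(d\phi/(2\pi))$ because $L$ is integrable by hypothesis and $\log|e^{i\phi}-z_j|$ is integrable on the unit circle for every fixed $z_j\in\mathbb{C}$. I do not foresee any substantive obstacle; the argument is essentially the classical observation that the monic orthogonal polynomial of degree $n$ has $L^2$-norm at least the geometric mean of the weight, and the main inputs are simply the Gram determinant identity together with the two Jensen-type estimates.
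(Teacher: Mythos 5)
Your proof is correct, and it takes a genuinely different route from the paper's. The paper simply cites Simon's sharp form of the strong Szeg\H{o} theorem: there one shows $D_{n-1}(e^L)=G_{n-1}F^n$ where $F=\lim_n D_n/D_{n-1}$ and $(G_n)$ is increasing, and then Szeg\H{o}'s first theorem identifies $F=e^{\widehat L_0}=1$. You instead prove the monotonicity directly: the Gram-determinant identity $D_n/D_{n-1}=\min_{q \text{ monic},\,\deg q=n}\int|q|^2e^L\,\frac{d\phi}{2\pi}$ is the standard variational characterization (valid here since $e^L>0$ a.e.\ makes every Gram matrix positive definite), and your combination of Jensen's inequality for the concave logarithm with Jensen's formula $\int_0^{2\pi}\log|q(e^{i\phi})|\frac{d\phi}{2\pi}=\sum_j\log^+|z_j|\ge 0$ gives the entropy lower bound $\min_q\int|q|^2e^L\,\frac{d\phi}{2\pi}\ge e^{\widehat L_0}=1$. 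Your integrability checks (that $2\log|q|+L\in L^1$ and $\int|q|^2e^L<\infty$) are exactly the points that need verifying, and they hold under the stated hypotheses $L\in L^1$, $e^L\in L^1$. What your argument buys is self-containedness and a slightly stronger statement for free: without assuming $\widehat L_0=0$ it shows $D_n/D_{n-1}\ge e^{\widehat L_0}$, i.e.\ $e^{-n\widehat L_0}D_{n-1}(e^L)$ is nondecreasing, which is precisely the monotonicity of the sequence $(G_n)$ that the paper imports from Simon. What the paper's citation buys is brevity and the additional asymptotic information ($F$ equals the limit of the $n$-th roots) that is anyway needed elsewhere in the surrounding discussion.
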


\begin{proof}
This is proven for example in \cite{simon}. More precisely, in Section 2 (Theorems 2.1-2.4) of \cite{simon} it is proven that 

\begin{equation}
\lim_{n\to\infty}\left(D_{n-1}(e^{L})\right)^{\frac{1}{n}}=\lim_{n\to\infty}\frac{D_{n}(e^{L})}{D_{n-1}(e^{L})}
\end{equation}

\noindent and that if this limit (which following \cite{simon} we denote by $F$) is positive, then for some increasing sequence $(G_n)$,

\begin{equation}
D_{n-1}(e^{L})=G_{n-1}F^{n} 
\end{equation}

As noted in Theorem 5.1 of \cite{simon}, it then follows from these results and Szeg\"o's theorem (the "weaker" one i.e. that $\frac{1}{n}\log D_{n-1}(e^{L})=\widehat{L}_0+\mathit{o}(1)$ - Theorem 4.1 of \cite{simon}) that $F=e^{\widehat{L}_0}=1$ and $D_{n-1}(e^{L})=G_{n-1}$ is increasing.

\end{proof}

Thus these two theorems and the dominated convergence theorem imply the following asymptotic behavior:

\begin{corollary}\label{cor:sigma1}
For any $\alpha,\beta\in \R$, and continuous $g$

\begin{align}
\notag &\lim_{n\to\infty}\int_0^{2\pi}\int_0^{2\pi}g(\theta)g(\theta')D_{n-1}(\sigma_{1,\theta,\theta'})d\theta d\theta'\\
&\qquad=\int_{0}^{2\pi}\int_0^{2\pi}g(\theta)g(\theta')e^{\frac{1}{4}\sum_{j=1}^{k}\frac{1}{j}(\alpha^{2}+\beta^{2})|e^{ij\theta}+e^{ij\theta'}|^{2}}d\theta d\theta'\\
\notag &\qquad =e^{\frac{1}{2}(\alpha^{2}+\beta^{2})\sum_{j=1}^{k}\frac{1}{j}}\int_{0}^{2\pi}\int_0^{2\pi}g(\theta)g(\theta')e^{\frac{(\alpha^{2}+\beta^{2})}{2}\sum_{j=1}^{k}\frac{1}{j}\cos (j(\theta-\theta'))}d\theta d\theta'.
\end{align}
\end{corollary}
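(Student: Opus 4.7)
The plan is to apply the Strong Szegő Theorem pointwise in $(\theta,\theta')$ and then upgrade to convergence of the double integral via dominated convergence, using the monotonicity statement from the previous theorem as the source of the uniform bound.

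First, observe that $\sigma_{1,\theta,\theta'} = e^{L_{\theta,\theta'}}$ where $L_{\theta,\theta'}$ is a real-valued trigonometric polynomial of degree $k$ with vanishing zeroth Fourier coefficient. Its $j$-th Fourier coefficient (for $1\le j\le k$) is
\begin{equation*}
\widehat{L}_j = -\frac{1}{2j}(\alpha-\beta i)\bigl(e^{-ij\theta}+e^{-ij\theta'}\bigr),
\end{equation*}
so $|\widehat L_j|^2 = \frac{\alpha^2+\beta^2}{4j^2}|e^{ij\theta}+e^{ij\theta'}|^2$ and Theorem \ref{th:sz} gives, for each fixed $(\theta,\theta')$,
\begin{equation*}
D_{n-1}(\sigma_{1,\theta,\theta'}) \xrightarrow[n\to\infty]{} \exp\!\left(\tfrac{\alpha^2+\beta^2}{4}\sum_{j=1}^{k}\tfrac{1}{j}|e^{ij\theta}+e^{ij\theta'}|^2\right).
\end{equation*}
Note this limit function is continuous on the compact torus $[0,2\pi)^2$, hence uniformly bounded.

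Second, the previous theorem (monotonicity of Toeplitz determinants for symbols with $\widehat L_0=0$) provides exactly the domination we need: for every $n$ and every $(\theta,\theta')$,
\begin{equation*}
D_{n-1}(\sigma_{1,\theta,\theta'}) \le \lim_{m\to\infty} D_{m-1}(\sigma_{1,\theta,\theta'}) = \exp\!\left(\tfrac{\alpha^2+\beta^2}{4}\sum_{j=1}^{k}\tfrac{1}{j}|e^{ij\theta}+e^{ij\theta'}|^2\right).
\end{equation*}
Since the right-hand side is continuous on $[0,2\pi)^2$ it is bounded by some constant $M<\infty$, so $D_{n-1}(\sigma_{1,\theta,\theta'}) \le M$ uniformly in $n,\theta,\theta'$. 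Combined with the continuity and boundedness of $g$, this furnishes an integrable majorant $M\|g\|_\infty^2$ for the integrand, and the dominated convergence theorem yields the first equality of the corollary.

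For the second equality one just computes $|e^{ij\theta}+e^{ij\theta'}|^2 = 2+2\cos(j(\theta-\theta'))$, so
\begin{equation*}
\tfrac{\alpha^2+\beta^2}{4}\sum_{j=1}^{k}\tfrac{1}{j}|e^{ij\theta}+e^{ij\theta'}|^2
= \tfrac{\alpha^2+\beta^2}{2}\sum_{j=1}^{k}\tfrac{1}{j} + \tfrac{\alpha^2+\beta^2}{2}\sum_{j=1}^{k}\tfrac{1}{j}\cos(j(\theta-\theta')),
\end{equation*}
and factoring the $\theta,\theta'$-independent prefactor out of the double integral gives the stated identity. The only non-routine point is obtaining the uniform-in-$(\theta,\theta')$ domination, and that is precisely what the monotonicity result hands us for free; without it one would have to quantify the $o(1)$ in the Strong Szegő theorem uniformly in the symbol, which is a far less pleasant task.
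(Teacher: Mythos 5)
Your proof is correct and follows exactly the route the paper intends: the Strong Szegő theorem for the pointwise limit (with the Fourier coefficients $\widehat{L}_j=-\frac{1}{2j}(\alpha-\beta i)(e^{-ij\theta}+e^{-ij\theta'})$ computed correctly), the monotonicity theorem to dominate $D_{n-1}(\sigma_{1,\theta,\theta'})$ by its continuous limit, and dominated convergence plus the identity $|e^{ij\theta}+e^{ij\theta'}|^2=2+2\cos(j(\theta-\theta'))$. The paper compresses all of this into one sentence, so your write-up simply supplies the details it leaves implicit.
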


\subsection{Asymptotics of $D_{n-1}(\sigma_2)$} The asymptotic behavior of determinants of the form of $D_{n-1}(\sigma_2)$ was already analyzed in \cite{widom} and generalized in \cite{basor} and \cite{ehrhardt}. Nevertheless, we shall formulate the results in terms of those of \cite{deift} as similar notations are used in \cite{claeyskrasovsky} which we shall rely on for the asymptotics of $D_{n-1}(\sigma_3)$.

\vspace{0.3cm}

As noted, $\sigma_2$ and $\sigma_3$ have Fisher-Hartwig singularities, namely they are both of the form

\begin{equation}\label{eq:fh}
f(z)=e^{V(z)}z^{\sum_{j=0}^{m}\beta_j}\prod_{j=0}^{m}|z-z_j|^{2\alpha_j}g_{z_j,\beta_j}(z)z_j^{-\beta_j},
\end{equation}

\noindent where $z=e^{i\phi}$ and $z_j$ are some fixed distinct points on the unit circle, in our notation they correspond to $e^{i\theta}$ and $e^{i\theta'}$, and $g_{z_j,\beta_j}$ was defined in \eqref{def:g}. 

\vspace{0.3cm}

For $\sigma_2$ the exact correspondence is the following: $m=0$, $\alpha_0=\frac{\alpha}{2}$, $z_0=e^{i\theta'}$, $\beta_0=-i\frac{\beta}{2}$, and 

\begin{equation}
V(z)=-\frac{1}{2}\sum_{j=1}^{k}\frac{1}{j}(\alpha-\beta i)e^{-ij\theta} z^{j}-\frac{1}{2}\sum_{j=1}^{k}\frac{1}{j}(\alpha+\beta i)e^{ij\theta} \overline{z}^{j}.
\end{equation}

In \cite{deift} a normalization is chosen where $z_0=1$, but making use of Remark \ref{rem:ti}, we can recover this by shifting $\theta\mapsto \theta-\theta'$, $\phi-\theta'\to \phi$, and $\theta'\to 0$.

\vspace{0.3cm}

The main result of \cite{deift} (proven in \cite{ehrhardt1} in the case where $V\in C^{\infty}$ - that is infinitely differentiable) is

\begin{theorem}[Ehrhardt; Deift, Its, and Krasovsky]\label{th:fh}
Let $f$ be of the form \eqref{eq:fh} and let $|||\beta|||:=\max_{j,k}|\mathrm{Re}\beta_j-\mathrm{Re}\beta_k|<1$, $\mathrm{Re}\alpha_j>-\frac{1}{2}$, $\alpha_j\pm \beta_j\neq -1,-2,...$ for $j=0,...,m$ and let $V(z)=\sum_{k\in \Z} V_k z^{k}$ satisfy 

\begin{equation}
\sum_{k\in \Z}|k|^{s}|V_k|<\infty
\end{equation}

\noindent for 

\begin{equation}
s>\frac{1+\sum_{j=0}^{m}((\mathrm{Im}\alpha_j)^{2}+(\mathrm{Re}\beta_j)^{2})}{1-|||\beta|||}.
\end{equation}

Then as $n\to\infty$, for $z_i\neq z_j$ for all $i\neq j$,

\begin{align}
\notag D_n(f)&=e^{nV_0+\sum_{k=1}^{\infty}kV_kV_{-k}}\prod_{j=0}^{m}e^{(\beta_j-\alpha_j)\sum_{k=1}^{\infty}V_k z_j^{k}}e^{-(\alpha_j+\beta_j)\sum_{k=1}^{\infty}V_{-k}z_j^{-k}}\\
&\qquad\times n^{\sum_{j=0}^{m}(\alpha_j^{2}-\beta_j^{2})}\prod_{0\leq j<k\leq m}|z_j-z_k|^{2(\beta_j\beta_k-\alpha_j\alpha_k)}\left(\frac{z_k}{z_j e^{i\pi}}\right)^{\alpha_j\beta_k-\alpha_k\beta_j}\\
&\qquad\notag\times \prod_{j=0}^{m}\frac{G(1+\alpha_j+\beta_j)G(1+\alpha_j-\beta_j)}{G(1+2\alpha_j)}(1+\mathit{o}(1)),
\end{align}

\noindent where $G$ is the Barnes $G$-function and the product $\prod_{0\leq j<k\leq m}$ is set to $1$ if $m=0$.

\end{theorem}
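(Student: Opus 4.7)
The plan is to follow the Riemann-Hilbert (RH) approach to orthogonal polynomials on the unit circle, which is the framework underlying both \cite{deift} and \cite{claeyskrasovsky}. First, express $D_n(f)$ via the norming constants of the orthonormal polynomials on the unit circle with weight $f(e^{i\phi}) d\phi/(2\pi)$: ratios of consecutive Toeplitz determinants equal the squared $L^2$-norms of the monic orthogonal polynomials, which in turn are encoded in the leading coefficients $\chi_k$. These coefficients can be read off from the large-$z$ expansion of the Fokas-Its-Kitaev $2 \times 2$ RH matrix $Y(z)$, characterized by the jump
\begin{equation*}
Y_+(z) = Y_-(z) \begin{pmatrix} 1 & z^{-n} f(z) \\ 0 & 1 \end{pmatrix}
\end{equation*}
across the unit circle together with the normalization $Y(z) \sim \mathrm{diag}(z^n, z^{-n})$ as $z \to \infty$. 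Thus the whole problem is reduced to extracting uniform large-$n$ asymptotics of $Y(z)$ at $\infty$.

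The heart of the proof is Deift-Zhou nonlinear steepest descent applied to $Y$. One performs a chain of transformations $Y \to T \to S \to R$: rescale at infinity, then \emph{open lenses} off the unit circle to factor the oscillatory jump into triangular factors decaying exponentially away from the circle. Away from the singularities $\{z_j\}$, a global parametrix $N(z)$ is built from the Szeg\"o function associated to the regular part $e^{V(z)} \prod_j |z-z_j|^{2\alpha_j}$ of $f$. In small fixed disks around each $z_j$, a local parametrix $P_{z_j}(z)$ is built that solves the local RH problem for a rank-one Fisher-Hartwig weight; this model problem is solvable in closed form in terms of confluent hypergeometric functions, and it is precisely this explicit local model that produces the Barnes $G$-function prefactor, the $n^{\alpha_j^2-\beta_j^2}$ exponent, and the cross-interaction factors $|z_j-z_k|^{2(\beta_j\beta_k-\alpha_j\alpha_k)}$. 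Matching $P_{z_j}$ to $N$ on the disk boundaries shows that the ratio $R(z) = S(z)N(z)^{-1}$ (with the obvious modification inside the disks) solves a small-norm RH problem with jump $I + O(n^{-1})$, uniformly provided the $z_j$ stay separated and $|||\beta||| < 1$, this last hypothesis being exactly what makes the Fisher-Hartwig singularities subdominant enough for the lens factorization to survive.

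From $R = I + O(n^{-1})$, together with the explicit global and local parametrices, one reads off the leading asymptotics of $\chi_{n-1}$ and hence of $\log D_n(f)/D_{n-1}(f)$. Summing, or equivalently integrating a differential identity in $\alpha_j, \beta_j, V_k$, recovers $\log D_n(f)$ up to an additive constant which is pinned down by two baseline cases: the pure Szeg\"o case $\alpha_j=\beta_j=0$ (Theorem \ref{th:sz}) supplies the prefactor $e^{nV_0 + \sum k V_k V_{-k}}$, while the $V \equiv 0$ pure Fisher-Hartwig case supplies the $G$-function normalization. The analytic factor $V$ separates cleanly: it enters only through the Szeg\"o function of the regular part, producing the terms $e^{(\beta_j-\alpha_j)\sum V_k z_j^k}$ and $e^{-(\alpha_j+\beta_j)\sum V_{-k}z_j^{-k}}$. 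The main obstacle, and the truly delicate part, is the construction and uniform matching of the confluent hypergeometric local parametrix in the presence of complex $\beta_j$: the Wiener-type summability condition imposed on $\{V_k\}$, with exponent $s$ depending on $|||\beta|||$, is exactly what is needed to ensure that the opened-lens factorization still produces an $o(1)$ error in $R$ despite the algebraic singularities, and tracking the Barnes $G$ prefactor through the differential identity requires a careful book-keeping of constants that only becomes transparent once the model parametrix is written explicitly.
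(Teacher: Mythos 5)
This statement is not proved in the paper at all: it is imported verbatim from the literature (the paper explicitly attributes it to \cite{ehrhardt1} for smooth $V$ and to \cite{deift} in the generality stated), so there is no internal proof to compare against. Your sketch does correctly describe the strategy actually used in \cite{deift}: the Fokas--Its--Kitaev Riemann--Hilbert characterization of the orthogonal polynomials, Deift--Zhou steepest descent with lens opening, a Szeg\H{o}-function global parametrix, confluent hypergeometric local parametrices at the $z_j$, a small-norm problem for $R$, and differential identities in the parameters whose integration constants are fixed by the pure-Szeg\H{o} and pure-Fisher--Hartwig baseline cases. In that sense you have identified the right proof.

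However, what you have written is a roadmap, not a proof: every step that carries actual mathematical weight is asserted rather than established. In particular (i) the explicit construction of the confluent hypergeometric parametrix and the verification that its matching with $N$ on the disk boundaries yields a jump for $R$ of size $I+\mathcal{O}(n^{|||\beta|||-1})$ --- not $I+\mathcal{O}(n^{-1})$ as you write; the exponent $|||\beta|||-1$ is precisely why the hypothesis $|||\beta|||<1$ appears, and your two statements about this are in tension with each other; (ii) the derivation and integration of the differential identities in $\alpha_j$, $\beta_j$ and $V$, which is where the cross terms $|z_j-z_k|^{2(\beta_j\beta_k-\alpha_j\alpha_k)}$ and the Barnes $G$ constants actually emerge (they are not read off from the local model alone); and (iii) the passage from analytic $V$ to the stated Sobolev-type condition $\sum_k|k|^s|V_k|<\infty$, which in \cite{deift} is a separate approximation argument rather than something absorbed into the lens estimates as you suggest. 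Each of these occupies a substantial portion of an eighty-page paper, so the proposal cannot be accepted as a proof of the theorem; it is an accurate table of contents for one.
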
 

\begin{remark}\label{rem:unif}
One can show that the error term is uniform in compact subsets of $\lbrace z_i\neq z_j\rbrace$: see e.g. \cite[Remark 1.4]{deift}. For $\sigma_2$ and $\sigma_3$ this can be seen also from the proofs of \cite{ehrhardt,widom}. More precisely, looking at the proof in \cite{ehrhardt} for the asymptotics corresponding to $\sigma_2$, one sees from the end of the proof (\cite[p. 254]{ehrhardt}) that the crucial estimate for uniformity is a uniform bound on the trace norm of the operator $A$ (defined on \cite[p. 249]{ehrhardt}). This then translates (through \cite[propositions 4.2 and 4.5]{ehrhardt}) into regularity conditions on the potential $V$ which in our case is uniformly bounded and all of its derivatives are uniformly bounded and one is able to prove uniform bounds on the trace norm of $A$. For $\sigma_3$, one can trace through the proof of \cite{widom} and uniform estimates boil down to the partial sums of $\sum_{l}(\frac{z_i}{z_j})^{l}$ are uniformly bounded in say $|z_i-z_j|\geq \epsilon$ - see \cite[p. 345]{widom} for the relevance of this estimate.
\end{remark}

\vspace{0.3cm}

Plugging in the values corresponding to $\sigma_2$ and shifting $\theta'\to 0$, $\theta\to\theta-\theta'$ (i.e. setting $\beta_0=-i\frac{\beta}{2}$, $\alpha_0=\frac{\alpha}{2}$, $V_j=-\frac{1}{2}(\alpha-\beta i)e^{-ij(\theta-\theta')}$, $V_{-j}=\overline{V_j}$), we see that  

\begin{align}
\notag D_{n-1}(\sigma_{2,\theta,\theta'})&=e^{\frac{1}{4}(\alpha^{2}+\beta^{2})\sum_{j=1}^{k}\frac{1}{j}}e^{\frac{1}{2}(\alpha^{2}+\beta^{2})\sum_{j=1}^{k}\frac{1}{j}\cos j(\theta-\theta')}n^{\frac{\alpha^{2}+\beta^{2}}{4}}\\
&\qquad\times \frac{G\left(1+\frac{\alpha}{2}-i\frac{\beta}{2}\right)G\left(1+\frac{\alpha}{2}+i\frac{\beta}{2}\right)}{G(1+\alpha)}(1+\mathit{o}(1)).
\end{align}

As, there is only one Fisher-Hartwig singularity in $\sigma_2$, we see by Remark \ref{rem:unif} that the error is uniform in $\theta,\theta'$. Thus we have

\begin{corollary}\label{cor:sigma2}
For any continuous function $g$ defined on the unit circle, $\alpha>-1$, and $\beta\in \R$, as $n\to\infty$

\begin{align}
\notag\int_0^{2\pi}&\int_0^{2\pi}g(\theta)g(\theta')D_{n-1}(\sigma_{2,\theta,\theta'})d\theta d\theta'&\\
&=n^{\frac{\alpha^{2}+\beta^{2}}{4}}\frac{G\left(1+\frac{\alpha}{2}-i\frac{\beta}{2}\right)G\left(1+\frac{\alpha}{2}+i\frac{\beta}{2}\right)}{G(1+\alpha)}e^{\frac{1}{4}(\alpha^{2}+\beta^{2})\sum_{j=1}^{k}\frac{1}{j}}\\
\notag &\qquad \times\left(\int_0^{2\pi}\int_0^{2\pi}g(\theta)g(\theta')e^{\frac{1}{2}(\alpha^{2}+\beta^{2})\sum_{j=1}^{k}\frac{1}{j}\cos j(\theta-\theta')}d\theta d\theta'+\mathit{o}(1)\right).
\end{align}

\end{corollary}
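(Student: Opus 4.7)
The plan is to apply Theorem \ref{th:fh} directly to $D_{n-1}(\sigma_{2,\theta,\theta'})$ with an appropriate parameter identification, verify that the resulting $(1+o(1))$ error is uniform in $(\theta,\theta')$, and then integrate against $g(\theta)g(\theta')$.

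First, by Remark \ref{rem:ti} I may shift so that the single Fisher-Hartwig singularity sits at $z_0=1$, replacing $(\theta,\theta')$ by $(\theta-\theta',0)$ inside $V$. Reading off the parameters gives $m=0$, $z_0=1$, $\alpha_0=\alpha/2$, $\beta_0=-i\beta/2$, while the potential has Fourier coefficients $V_0=0$, $V_j=-\frac{\alpha-i\beta}{2j}e^{-ij(\theta-\theta')}$ and $V_{-j}=\overline{V_j}$ for $1\leq j\leq k$, vanishing otherwise. The hypotheses of Theorem \ref{th:fh} are met: $|||\beta|||=0<1$ because there is only one $\beta_j$; the bound $\mathrm{Re}\,\alpha_0=\alpha/2>-1/2$ is exactly the assumption $\alpha>-1$; $\alpha_0\pm\beta_0=\alpha/2\mp i\beta/2$ is never a non-positive integer; and the summability condition on $V$ is trivial since $V$ is a trigonometric polynomial of degree $k$.

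Next, substituting these values into the formula of Theorem \ref{th:fh}, I compute term by term. Since $V_0=0$ the $e^{nV_0}$ factor equals $1$, and a direct calculation gives
\begin{equation*}
\sum_{l=1}^{\infty} l\, V_l V_{-l} = \frac{\alpha^2+\beta^2}{4}\sum_{j=1}^{k}\frac{1}{j},
\end{equation*}
independently of $\theta,\theta'$. The pair of exponentials at the singularity combine, using the identities $(\beta_0-\alpha_0)(\alpha-i\beta)=-(\alpha^2+\beta^2)/2$ and its conjugate $(\alpha_0+\beta_0)(\alpha+i\beta)=(\alpha^2+\beta^2)/2$, to yield the factor $\exp\!\left(\frac{\alpha^2+\beta^2}{2}\sum_{j=1}^{k}\frac{\cos j(\theta-\theta')}{j}\right)$. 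The power of $n$ is $n^{\alpha_0^2-\beta_0^2}=n^{(\alpha^2+\beta^2)/4}$, the Barnes-$G$ ratio reduces to the one in the corollary, and the product over pairs of singularities is empty since $m=0$. This reproduces the displayed pointwise formula for $D_{n-1}(\sigma_{2,\theta,\theta'})$ given immediately above the corollary statement.

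To upgrade this pointwise expansion to the integrated asymptotic, I invoke Remark \ref{rem:unif}: since there is only one Fisher-Hartwig singularity, there are no near-collisions between singularities, and the Fourier coefficients of $V$ are smooth and uniformly bounded functions of $(\theta,\theta')\in[0,2\pi)^2$, so the $(1+o(1))$ in Theorem \ref{th:fh} is uniform in $(\theta,\theta')$. Pulling the $(\theta,\theta')$-independent factors out of the double integral and noting that the remaining integrand $g(\theta)g(\theta')\exp\!\left(\frac{\alpha^2+\beta^2}{2}\sum_{j=1}^{k}\frac{\cos j(\theta-\theta')}{j}\right)$ is bounded on $[0,2\pi)^2$ lets me convert the uniform multiplicative $(1+o(1))$ into the additive $o(1)$ appearing in the stated formula. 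The only nontrivial step is this uniformity, but the authors have already secured it in Remark \ref{rem:unif}, so the proof reduces to the parameter-matching computation above.
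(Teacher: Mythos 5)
Your proposal is correct and follows essentially the same route as the paper: identify $\sigma_{2,\theta,\theta'}$ as a one-singularity Fisher--Hartwig symbol with $m=0$, $\alpha_0=\alpha/2$, $\beta_0=-i\beta/2$ and the truncated potential $V$, apply Theorem \ref{th:fh} after the shift of Remark \ref{rem:ti}, invoke Remark \ref{rem:unif} for uniformity of the error, and integrate. Your parameter computations (including the $1/j$ in $V_{\pm j}$, which the paper's inline parenthetical actually omits by what appears to be a typo) are correct, so there is nothing to add.
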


\subsection{Asymptotics of $D_{n-1}(\sigma_3)$} We again have a Toeplitz determinant with Fisher-Hartwig singularities. Compared to \eqref{eq:fh}, the relationship is $V=0$, $m=1$, $z_0=e^{i\theta}$,  $z_1=e^{i\theta'}$, $\alpha_0=\alpha_1=\frac{\alpha}{2}$, and $\beta_0=\beta_1=-i\frac{\beta}{2}$, or shifting to the normalization of Theorem \ref{th:fh}, $z_0=1$ and $z_1=e^{i(\theta'-\theta)}$. Theorem \ref{th:fh} and Remark \ref{rem:unif} then imply that for any $\epsilon>0$,

\begin{equation}
\lim_{n\to\infty}\frac{D_{n-1}(\sigma_{3,\theta,\theta'})}{n^{\frac{1}{2}(\alpha^{2}+\beta^{2})}}=\left|e^{i\theta}-e^{i\theta'}\right|^{-\frac{\alpha^{2}+\beta^{2}}{2}}\frac{G\left(1+\frac{\alpha}{2}-\frac{\beta}{2}i\right)^{2}G\left(1+\frac{\alpha}{2}+\frac{\beta}{2}i\right)^{2}}{G(1+\alpha)^{2}}
\end{equation}

\noindent uniformly in $|\theta-\theta'|\geq \epsilon$.

\vspace{0.3cm}

Compared to $D_{n-1}(\sigma_2)$ we have here the important difference that we must also consider the situation $\theta\to \theta'$ and we can't simply rely on Theorem \ref{th:fh}. 

\vspace{0.3cm}

Luckily the situation where $\theta\to \theta'$ has recently been analyzed in \cite{claeyskrasovsky}. In fact, the following is essentially their proof of Theorem 1.8 in \cite{claeyskrasovsky}, but as on a superficial level, our setting looks slightly more general, we write down the details. Specifying their Theorem 1.5 into our setting ($\alpha_1=\alpha_2=\frac{\alpha}{2}$, $\beta_1=\beta_2=-i\frac{\beta}{2}$) and ignoring the finer asymptotics that aren't needed for our result, we have the following

\begin{theorem}[Claeys and Krasovsky]\label{th:ck1}
There exists a $t_0>0$ such that for $\alpha>-\frac{1}{2}$ and $0<|\theta-\theta'|<2t_0$, 

\begin{align}
\log D_{n-1}(\sigma_{3,\theta,\theta'})&=\log D_{n-1}(\sigma_{3,0,0})+\int_0^{-in|\theta-\theta'|}\frac{1}{s}\left(\sigma(s)-\frac{1}{2}(\alpha^{2}+\beta^{2})\right)ds\notag\\
&\qquad -\frac{1}{2}(\alpha^{2}+\beta^{2})\log \frac{2\sin \frac{|\theta-\theta'|}{2}}{|\theta-\theta'|}+\mathit{o}(1),
\end{align}

\noindent where the integral is along the negative imaginary axis, $\mathit{o}(1)$ is uniform in $0<|\theta-\theta'|<2t_0$, and

\begin{equation}
\log D_{n-1}(\sigma_{3,0,0})=(\alpha^{2}+\beta^{2})\log n+\log \frac{G(1+\alpha-i\beta)G(1+\alpha+i\beta)}{G(1+2\alpha)}+\mathit{o}(1).
\end{equation}

Moreover $\sigma$ is a continuous function (depending only on $\alpha$ and $\beta$ - not $\theta,\theta',$ or $n$) whose asymptotic behavior is the following: there is some $\delta>0$ such that

\begin{equation}
\sigma(s)=\frac{1}{2}(\alpha^{2}+\beta^{2})+\mathcal{O}(|s|^{\delta}),
\end{equation}

\noindent as $s\to 0$ along the negative imaginary axis, and

\begin{equation}
\sigma(s)=\mathcal{O}(|s|^{-\delta})
\end{equation}

\noindent as $s\to\infty$ along the negative imaginary axis.
\end{theorem}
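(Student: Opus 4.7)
The plan is to work with the differential identity for Toeplitz determinants and reduce the problem to the analysis of a Riemann-Hilbert model problem associated with Painlev\'e V. By the rotation invariance of Haar measure (Remark \ref{rem:ti}), $D_{n-1}(\sigma_{3,\theta,\theta'})$ depends only on $t=|\theta-\theta'|$, so I view it as a function of one real parameter. First, I would establish the initial value. At $t=0$ the two Fisher--Hartwig singularities at $e^{i\theta}$ and $e^{i\theta'}$ coalesce into a single singularity of strength $2\alpha_0=\alpha$ and $2\beta_0=-i\beta$. Applying Theorem \ref{th:fh} with $V=0$ and $m=0$ (one merged singularity) produces exactly the claimed expression for $\log D_{n-1}(\sigma_{3,0,0})$, with the Barnes $G$-ratio $G(1+\alpha-i\beta)G(1+\alpha+i\beta)/G(1+2\alpha)$.

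Next I would obtain a differential identity for $\partial_t \log D_{n-1}(\sigma_{3,\theta,\theta'})$. Following the Jimbo--Miwa--Ueno/Deift--Its--Krasovsky framework for orthogonal polynomials on the unit circle, $\log D_{n-1}$ can be written in terms of the solution $Y(z;t)$ of the standard RH problem with jump $\sigma_{3,\theta,\theta'}$; differentiating the jump matrix in $t$ and using the resulting residue calculation expresses $\partial_t \log D_{n-1}$ as a bilinear form in certain entries of $Y$ evaluated near the two singularities. Now I rescale $s=-int$ and perform the Deift--Zhou steepest descent. Away from the merging point, standard parametrices apply. Near the two (nearly) merging singularities I would build a local parametrix out of a model RH problem that depends only on the rescaled variable $s$ and on $(\alpha,\beta)$. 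This model problem is the Painlev\'e V Lax-pair RH problem, and its Jimbo--Miwa--Ueno Hamiltonian is precisely the function $\sigma(s)$. The rescaled differential identity becomes $\partial_s \log D_{n-1} = s^{-1}\sigma(s) + o(s^{-1})$ uniformly in $s\in(0, 2nt_0)$.

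The boundary behavior of $\sigma$ is forced by matching. As $s\to 0$ along the negative imaginary axis, the model RH problem degenerates into the one-singularity problem of strength $(\alpha,-i\beta)$, which forces $\sigma(0)=\tfrac12(\alpha^2+\beta^2)$ and the power-law remainder $O(|s|^\delta)$. As $s\to\infty$, the model degenerates into two decoupled one-singularity problems of strength $(\tfrac\alpha2,-i\tfrac\beta2)$, which forces $\sigma(s)\to 0$ with a $O(|s|^{-\delta})$ rate coming from the leading correction. Integrating the differential identity from $t=0$ to $t=|\theta-\theta'|$ yields the stated integral $\int_0^{-in|\theta-\theta'|}s^{-1}(\sigma(s)-\tfrac12(\alpha^2+\beta^2))\,ds$, where the subtraction is precisely what is needed to make the integrand integrable at $s=0$ in view of the behavior just described. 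The explicit correction $-\tfrac12(\alpha^2+\beta^2)\log(2\sin(|\theta-\theta'|/2)/|\theta-\theta'|)$ appears because the large-$s$ matching is naturally stated with $|e^{i\theta}-e^{i\theta'}|=2\sin(|\theta-\theta'|/2)$, whereas the rescaling uses the linear distance $|\theta-\theta'|$.

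The main obstacle is the construction of the Painlev\'e V local parametrix and proving that the differential identity is satisfied uniformly for all $t\in(0,2t_0)$, including the delicate transitional window $nt\asymp 1$ where neither the merged-singularity asymptotics nor the two-separated-singularities asymptotics of Theorem \ref{th:fh} apply. This is exactly the regime in which the special function $\sigma$ is genuinely needed, and the uniformity of the $o(1)$ error is what requires the full RH steepest descent rather than either of the two limiting cases of Theorem \ref{th:fh}. Once this uniform analysis is in hand, the remaining steps (exact evaluation at $t=0$ and trigonometric rewriting of the distance) are essentially bookkeeping.
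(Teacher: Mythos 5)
This statement is not proved in the paper at all: it is imported verbatim from the literature, namely Theorem 1.5 of \cite{claeyskrasovsky} specialized to $\alpha_1=\alpha_2=\frac{\alpha}{2}$, $\beta_1=\beta_2=-i\frac{\beta}{2}$ and with the finer asymptotics discarded, so there is no ``paper's own proof'' to compare against. What you have written is a reconstruction of the strategy of the original Claeys--Krasovsky proof, and as a roadmap it is accurate: the value at coalescence does follow from Theorem \ref{th:fh} applied to the single merged singularity with parameters $(\alpha,-i\beta)$ (giving the stated Barnes $G$-ratio and the exponent $\alpha^2+\beta^2$ of $n$); the $t$-derivative of $\log D_{n-1}(\sigma_{3,\theta,\theta'})$ is expressed through the RH problem for the associated orthogonal polynomials; the local parametrix at the two nearly-merged points is built from a model RH problem in the rescaled variable $s=-int$ whose Jimbo--Miwa--Ueno-type quantity is the function $\sigma$ (a solution of the $\sigma$-form of Painlev\'e V); and the boundary behaviors of $\sigma$ at $0$ and $\infty$ are exactly what make the subtracted integral converge and reproduce the separated-singularity asymptotics, with the $\log\bigl(2\sin\frac{|\theta-\theta'|}{2}/|\theta-\theta'|\bigr)$ term accounting for the chordal-versus-linear distance.

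That said, you should be clear that what you have is an outline and not a proof. The items you defer to ``the main obstacle'' are precisely where essentially all of the work in \cite{claeyskrasovsky} lies: proving solvability of the Painlev\'e V model problem (equivalently, that $\sigma$ has no poles on the relevant ray for the admissible $\alpha,\beta$, which is where the restriction $\alpha>-\frac12$ enters), establishing the claimed $\mathcal{O}(|s|^{\delta})$ and $\mathcal{O}(|s|^{-\delta})$ expansions rigorously rather than by formal matching, and controlling the error uniformly across the transitional window $n|\theta-\theta'|\asymp 1$ where neither limiting case of Theorem \ref{th:fh} applies. In the context of this paper, which treats Theorem \ref{th:ck1} as an external input and only uses it as stated, citing \cite{claeyskrasovsky} is the appropriate ``proof''; your sketch is a faithful description of how that reference proceeds, but it should not be presented as a self-contained argument.
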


We shall also make use of their Theorem 1.11 which in our situation simplifies to the following.

\begin{theorem}[Claeys and Krasovsky]\label{th:ck2}
Let $\alpha>-1$. Then there exists a sufficiently small $t_0$ such that for $\frac{\log n}{n}\leq |\theta-\theta'|<2t_0$

\begin{align}
\notag \log D_{n-1}(\sigma_{3,\theta,\theta'})&=\frac{1}{2}(\alpha^{2}+\beta^{2})\log n-\frac{1}{2}(\alpha^{2}+\beta^{2})\log \left(2\sin\frac{|\theta-\theta'|}{2}\right)\\
&\qquad +\log \frac{G(1+\frac{\alpha}{2}-i\frac{\beta}{2})^{2}G(1+\frac{\alpha}{2}+i\frac{\beta}{2})^{2}}{G(1+\alpha)^{2}}+\mathit{o}(1)
\end{align}

\noindent and the error term is uniform in $\frac{\log n}{n}\leq |\theta-\theta'|<t_0$.

\end{theorem}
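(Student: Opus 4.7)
The plan is to derive Theorem \ref{th:ck2} from the more refined expansion in Theorem \ref{th:ck1} by evaluating the integral along the negative imaginary axis in the regime $T=n|\theta-\theta'|\to\infty$, and then to fix the additive constant by matching with the separated-singularity asymptotics provided by Theorem \ref{th:fh}. Set $c=\frac{1}{2}(\alpha^{2}+\beta^{2})$ and parameterize $s=-it$, so $ds/s=dt/t$ and the integral in Theorem \ref{th:ck1} becomes $\int_0^T t^{-1}(\sigma(-it)-c)\,dt$. By the stated bounds on $\sigma$, the integrand is $\mathcal{O}(t^{\delta-1})$ near $0$ and equals $-c/t+\mathcal{O}(t^{-1-\delta})$ at infinity, so splitting at $t=1$ gives
\begin{equation*}
\int_0^T\frac{\sigma(-it)-c}{t}\,dt=-c\log T+C_0(\alpha,\beta)+\mathcal{O}(T^{-\delta}),
\end{equation*}
where $C_0(\alpha,\beta)=\int_0^1 t^{-1}(\sigma(-it)-c)\,dt+\int_1^\infty t^{-1}\sigma(-it)\,dt$ is a finite absolute constant. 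Since $T\ge\log n\to\infty$ under the hypothesis, the $\mathcal{O}(T^{-\delta})$ error is $o(1)$ uniformly in $|\theta-\theta'|$ in the stated range.

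Substituting this into Theorem \ref{th:ck1} and using $\log\frac{2\sin(|\theta-\theta'|/2)}{|\theta-\theta'|}=\log\bigl(2\sin\tfrac{|\theta-\theta'|}{2}\bigr)-\log|\theta-\theta'|$, the $\log|\theta-\theta'|$ contributions cancel exactly and one obtains
\begin{equation*}
\log D_{n-1}(\sigma_{3,\theta,\theta'})=c\log n-c\log\bigl(2\sin\tfrac{|\theta-\theta'|}{2}\bigr)+K(\alpha,\beta)+o(1),
\end{equation*}
uniformly for $\frac{\log n}{n}\le|\theta-\theta'|<2t_0$, with $K(\alpha,\beta)=C_0(\alpha,\beta)+\log\frac{G(1+\alpha-i\beta)G(1+\alpha+i\beta)}{G(1+2\alpha)}$. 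To pin down $K$, specialize to any sequence with $|\theta-\theta'|\to c_0\in(0,2t_0)$; the singularities remain separated, Theorem \ref{th:fh} applies with $V=0$, $m=1$, $\alpha_0=\alpha_1=\frac{\alpha}{2}$, $\beta_0=\beta_1=-i\frac{\beta}{2}$, and gives the same expansion with $K$ replaced by $\log\frac{G(1+\frac{\alpha}{2}-i\frac{\beta}{2})^{2}G(1+\frac{\alpha}{2}+i\frac{\beta}{2})^{2}}{G(1+\alpha)^{2}}$. The two expansions must coincide, fixing $K$ as the claimed Barnes-$G$ quotient. (As a by-product this yields the nontrivial identity $C_0(\alpha,\beta)=\log\frac{G(1+\frac{\alpha}{2}-i\frac{\beta}{2})^{2}G(1+\frac{\alpha}{2}+i\frac{\beta}{2})^{2}G(1+2\alpha)}{G(1+\alpha)^{2}G(1+\alpha-i\beta)G(1+\alpha+i\beta)}$, which one need not verify directly.)

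The main obstacle is the uniformity of the $o(1)$ at the lower end of the permissible window: the expansion in Theorem \ref{th:ck1} is uniform in $0<|\theta-\theta'|<2t_0$, but the large-$T$ evaluation of the integral only produces a uniform remainder when $T=n|\theta-\theta'|\to\infty$, which is precisely what the hypothesis $|\theta-\theta'|\ge\frac{\log n}{n}$ guarantees. A subtle point to flag is that the matching argument assumes $K(\alpha,\beta)$ is a genuine constant in $(\theta,\theta')$; this is legitimate because in Theorem \ref{th:ck1} the function $\sigma$ depends only on $(\alpha,\beta)$, so $C_0$ and hence $K$ have no residual $(\theta,\theta')$ dependence. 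Avoiding the matching step entirely would require a direct Riemann-Hilbert evaluation of the constant along the lines of \cite{claeyskrasovsky}, which is the technically heavy part of their original argument.
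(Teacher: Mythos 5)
Your proposal is correct, but it takes a genuinely different route from the paper: the paper gives no proof of this statement at all, simply quoting it as Theorem 1.11 of \cite{claeyskrasovsky} specialized to $\alpha_1=\alpha_2=\frac{\alpha}{2}$, $\beta_1=\beta_2=-i\frac{\beta}{2}$, whereas you \emph{derive} it from Theorem \ref{th:ck1} together with a matching against Theorem \ref{th:fh}. Your computation is sound: the decay hypotheses on $\sigma$ make $C_0$ finite and give $\int_0^{T}t^{-1}(\sigma(-it)-c)\,dt=-c\log T+C_0+\mathcal{O}(T^{-\delta})$ with $T=n|\theta-\theta'|\geq\log n$, the $\log|\theta-\theta'|$ terms cancel exactly as you say, and since both your expansion and the separated-singularity expansion of Theorem \ref{th:fh} hold at fixed separation with constants independent of $(\theta,\theta')$, the matching legitimately identifies $K$ with the Barnes-$G$ quotient. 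This is in fact close in spirit to the manipulation the paper performs on the $I_2$ term in the proof of Corollary \ref{cor:sigma3} (splitting the $s$-integral at $-i$), pushed to a full asymptotic in the regime where $T\to\infty$; what the citation buys over your argument is only the wider parameter range. Indeed, the one caveat is that your derivation inherits the hypothesis $\alpha>-\frac{1}{2}$ from Theorem \ref{th:ck1}, so it does not establish the statement for the full range $\alpha>-1$ claimed; for that one really needs the Riemann--Hilbert analysis of \cite{claeyskrasovsky}. Since the paper only ever invokes the result for $\alpha>-\frac{1}{2}$, this discrepancy is harmless for the application.
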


Combining these results we have the following asymptotics (essentially Theorem 1.15 of \cite{claeyskrasovsky}):

\begin{corollary}\label{cor:sigma3}
For any continuous function $g$ defined on the unit circle, $\alpha>-\frac{1}{2}$, and $\alpha^{2}+\beta^{2}<2$

\begin{align}
\notag\lim_{n\to\infty}&n^{-\frac{\alpha^{2}+\beta^{2}}{2}}\int_0^{2\pi}\int_0^{2\pi}g(\theta)g(\theta')D_{n-1}(\sigma_{3,\theta,\theta'})d\theta d\theta'\\
&=\frac{G(1+\frac{\alpha}{2}-i\frac{\beta}{2})^{2}G(1+\frac{\alpha}{2}+i\frac{\beta}{2})^{2}}{G(1+\alpha)^{2}}\\
\notag &\qquad \times\int_0^{2\pi}\int_0^{2\pi}g(\theta)g(\theta')|e^{i\theta}-e^{i\theta'}|^{-\frac{\alpha^{2}+\beta^{2}}{2}}d\theta d\theta'.
\end{align}

\end{corollary}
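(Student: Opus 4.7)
The plan is to split the double integral over $[0,2\pi)^{2}$ according to whether the separation $|\theta-\theta'|$ (read modulo $2\pi$) exceeds $\log n/n$. On the bulk $\{|\theta-\theta'|\ge \log n/n\}$, Theorem \ref{th:ck2} (supplemented by Theorem \ref{th:fh} and Remark \ref{rem:unif} on separations bounded away from $0$) provides both a uniform pointwise limit and an $L^{1}$-dominator, so dominated convergence delivers the claimed limit integral. On the diagonal slab $\{|\theta-\theta'|<\log n/n\}$, Theorem \ref{th:ck1} supplies a uniform upper bound whose integral is $\mathit{o}(n^{(\alpha^{2}+\beta^{2})/2})$. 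The hypothesis $\alpha^{2}+\beta^{2}<2$ enters twice in the same guise: it is equivalent to $(\alpha^{2}+\beta^{2})/2<1$, which makes the limit kernel $|e^{i\theta}-e^{i\theta'}|^{-(\alpha^{2}+\beta^{2})/2}$ locally integrable on the torus and at the same time forces the diagonal contribution to vanish.

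\textbf{Bulk contribution.} Fix $t_{0}>0$ as in Theorems \ref{th:ck1} and \ref{th:ck2}. For $\log n/n\le |\theta-\theta'|<2t_{0}$, Theorem \ref{th:ck2} yields, uniformly in the separation,
\[
n^{-(\alpha^{2}+\beta^{2})/2} D_{n-1}(\sigma_{3,\theta,\theta'}) = \frac{G(1+\tfrac{\alpha-i\beta}{2})^{2}G(1+\tfrac{\alpha+i\beta}{2})^{2}}{G(1+\alpha)^{2}} |e^{i\theta}-e^{i\theta'}|^{-(\alpha^{2}+\beta^{2})/2}(1+\mathit{o}(1)),
\]
after using $|e^{i\theta}-e^{i\theta'}|=2\sin(|\theta-\theta'|/2)$. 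For separations bounded away from $0$ the same pointwise limit, uniform in compacts, follows from Theorem \ref{th:fh} via Remark \ref{rem:unif}. For $n$ sufficiently large the normalized bulk integrand is therefore dominated by a constant multiple of $|g(\theta)g(\theta')||e^{i\theta}-e^{i\theta'}|^{-(\alpha^{2}+\beta^{2})/2}$, which lies in $L^{1}(d\theta d\theta')$ precisely because $(\alpha^{2}+\beta^{2})/2<1$. Dominated convergence then produces the right-hand side of the corollary from the bulk alone.

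\textbf{Diagonal slab.} It remains to show
\[
n^{-(\alpha^{2}+\beta^{2})/2}\int\!\!\int_{|\theta-\theta'|<\log n/n}|g(\theta)g(\theta')|D_{n-1}(\sigma_{3,\theta,\theta'})d\theta d\theta' \longrightarrow 0.
\]
For $n$ large the slab is contained in $\{|\theta-\theta'|<2t_{0}\}$, so Theorem \ref{th:ck1} applies. The two asymptotic regimes of $\sigma$ imply that $I(t):=\int_{0}^{-it}(\sigma(s)-\tfrac{1}{2}(\alpha^{2}+\beta^{2}))s^{-1}ds$ is uniformly bounded on $\{|t|\le 1\}$ (the integrand is $\mathcal{O}(|s|^{\delta-1})$ near zero) and satisfies $I(t)=-\tfrac{1}{2}(\alpha^{2}+\beta^{2})\log t+\mathcal{O}(1)$ as $t\to\infty$ (since $\sigma(s)\to 0$ there). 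Combined with $\log D_{n-1}(\sigma_{3,0,0})=(\alpha^{2}+\beta^{2})\log n+\mathcal{O}(1)$ and the boundedness of $\log(2\sin(|\theta-\theta'|/2)/|\theta-\theta'|)$ on the slab, Theorem \ref{th:ck1} gives the uniform bound
\[
D_{n-1}(\sigma_{3,\theta,\theta'})\le C\,n^{\alpha^{2}+\beta^{2}}\max\!\bigl(1,(n|\theta-\theta'|)^{-(\alpha^{2}+\beta^{2})/2}\bigr).
\]
The substitution $s=n|\theta-\theta'|$ then bounds the slab integral by $\mathcal{O}(n^{\alpha^{2}+\beta^{2}-1}\log n)$, with integrability of $s^{-(\alpha^{2}+\beta^{2})/2}$ near zero secured by $(\alpha^{2}+\beta^{2})/2<1$. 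After dividing by $n^{(\alpha^{2}+\beta^{2})/2}$, the slab contributes $\mathcal{O}(n^{(\alpha^{2}+\beta^{2})/2-1}\log n)$, which vanishes under the hypothesis $\alpha^{2}+\beta^{2}<2$. The main obstacle is precisely this diagonal step: one must extract a pointwise bound from the implicit representation in Theorem \ref{th:ck1} by splicing the two asymptotic regimes of $\sigma$, and the threshold $\alpha^{2}+\beta^{2}=2$ is sharp for this approach — at the boundary both the local integrability of the limit kernel and the negligibility of the slab break down simultaneously.
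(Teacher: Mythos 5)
Your proof is correct and follows essentially the same route as the paper: split at distance $\log n/n$ from the diagonal, use Theorem \ref{th:ck2} together with Theorem \ref{th:fh} and Remark \ref{rem:unif} plus dominated convergence on the bulk, and use Theorem \ref{th:ck1} with the condition $\alpha^{2}+\beta^{2}<2$ to kill the diagonal slab. The only (harmless) difference is bookkeeping: the paper further splits the slab at $|\theta-\theta'|=1/n$ and treats the two regimes of $\sigma$ separately, whereas you splice them into the single uniform bound $D_{n-1}(\sigma_{3,\theta,\theta'})\le C n^{\alpha^{2}+\beta^{2}}\max\bigl(1,(n|\theta-\theta'|)^{-(\alpha^{2}+\beta^{2})/2}\bigr)$, which is a slightly more economical version of the same estimate.
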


\begin{proof}
 Let us split the $\theta,\theta'$ integrals into four parts: $I_1$, being the integral over $0<|\theta'-\theta|\leq \frac{1}{n}$, $I_2$ corresponding to $\frac{1}{n}<|\theta'-\theta|<\frac{\log n}{n}$, $I_3$ corresponding to $\frac{\log n}{n}\leq |\theta'-\theta|<2t_0$, and $I_4$ corresponding to $2t_0\leq |\theta'-\theta|$.

\vspace{0.3cm}

By Theorem \ref{th:fh} and Remark \ref{rem:unif}, we have 

\begin{align}
\notag\lim_{n\to\infty}n^{-\frac{\alpha^{2}+\beta^{2}}{2}}I_4&=\frac{G(1+\frac{\alpha}{2}-i\frac{\beta}{2})^{2}G(1+\frac{\alpha}{2}+i\frac{\beta}{2})^{2}}{G(1+\alpha)^{2}}\\
&\qquad \times\int_{|\theta-\theta'|\geq 2t_0}g(\theta)g(\theta')|e^{i\theta}-e^{i\theta'}|^{-\frac{\alpha^{2}+\beta^{2}}{2}}d\theta d\theta'.
\end{align} 

For $I_1$, we note that Theorem \ref{th:ck1} implies that

\begin{align}
&\notag I_1=n^{\alpha^{2}+\beta^{2}}\frac{G(1+\alpha-i\beta)G(1+\alpha+i\beta)}{G(1+2\alpha)}\\
&\qquad \times\int_{|\theta-\theta'|\leq \frac{1}{n}}g(\theta)g(\theta')e^{\int_0^{-in|\theta-\theta'|}\frac{1}{s}\left(\sigma(s)-\frac{1}{2}(\alpha^{2}+\beta^{2})\right)ds}\\
\notag &\qquad \times e^{-\frac{1}{2}(\alpha^{2}+\beta^{2})\log \frac{2\sin \frac{|\theta-\theta'|}{2}}{|\theta-\theta'|}+\mathit{o}(1)}d\theta d\theta'.
\end{align} 
 
Moreover, the asymptotics of $\sigma$ near zero on the negative imaginary axis, imply that the integrand in the exponential converges and the integrand in the $\theta,\theta'$-integral is bounded, so we see that $I_1=\mathcal{O}(n^{\alpha^{2}+\beta^{2}-1})$ and as $\frac{\alpha^{2}+\beta^{2}}{2}<1$, this implies that $n^{-\frac{1}{2}(\alpha^{2}+\beta^{2})}I_1\to 0$ as $n\to \infty$. 
 
\vspace{0.3cm}

For $I_2$, using Theorem \ref{th:ck1} we write for $\frac{1}{n}< |\theta-\theta'|<\frac{\log n}{n}$

\begin{align}
\notag\log &D_{n-1}(\sigma_{3,\theta,\theta'})\\
\notag &=(\alpha^{2}+\beta^{2})\log n+\log \frac{G(1+\alpha-i\beta)G(1+\alpha+i\beta)}{G(1+2\alpha)}\\
&\qquad +\int_0^{-i}\frac{\sigma(s)-\frac{1}{2}(\alpha^{2}+\beta^{2})}{s}ds-\frac{1}{2}(\alpha^{2}+\beta^{2})\log n\\
\notag &\qquad+\int_{-i}^{-in|\theta-\theta'|}\frac{\sigma(s)}{s}ds-\frac{\alpha^{2}+\beta^{2}}{2}\log \left(2 \sin\frac{|\theta-\theta'|}{2}\right)+\mathit{o}(1)\notag
\end{align} 

\noindent and we have

\begin{align}
\notag I_2&=n^{\frac{\alpha^{2}+\beta^{2}}{2}}\frac{G(1+\alpha-i\beta)G(1+\alpha+i\beta)}{G(1+2\alpha)}e^{\int_0^{-i}\frac{\sigma(s)-\frac{1}{2}(\alpha^{2}+\beta^{2})}{s}ds}\\
&\qquad \times \int_{\frac{1}{n}\leq |\theta-\theta'|\leq\frac{\log n}{n}}g(\theta)g(\theta')\left(2\sin\frac{|\theta-\theta'|}{2}\right)^{-\frac{\alpha^{2}+\beta^{2}}{2}}\\
\notag &\qquad \times e^{\int_{-i}^{-in|\theta-\theta'|}\frac{\sigma(s)}{s}ds+\mathit{o}(1)}d\theta d\theta'.
\end{align} 
 
The asymptotics of $\sigma(s)$ as $s\to\infty$ along the negative imaginary axis imply that the integrand can be bounded by a constant times $|\theta-\theta'|^{-\frac{\alpha^{2}+\beta^{2}}{2}}$ which is an integrable singularity as $\frac{\alpha^{2}+\beta^{2}}{2}<1$. We conclude that as $n\to\infty$, $n^{-\frac{\alpha^{2}+\beta^{2}}{2}}I_2\to 0$.

\vspace{0.3cm}

For $I_3$, we make use of Theorem \ref{th:ck2}. This yields immediately that 

\begin{align}
\notag n^{-\frac{\alpha^{2}+\beta^{2}}{2}}I_3&=\frac{G(1+\frac{\alpha}{2}-i\frac{\beta}{2})^{2}G(1+\frac{\alpha}{2}+i\frac{\beta}{2})^{2}}{G(1+\alpha)^{2}}\\
&\qquad\times \int_{\frac{\log n}{n}\leq |\theta-\theta'|<  2t_0}g(\theta)g(\theta')|e^{i\theta}-e^{i\theta'}|^{-\frac{\alpha^{2}+\beta^{2}}{2}}e^{\mathit{o}(1)}d\theta d\theta'.
\end{align}
 
As the singularity $|e^{i\theta}-e^{i\theta'}|^{-\frac{\alpha^{2}+\beta^{2}}{2}}=(2\sin\frac{|\theta-\theta'|}{2})^{-\frac{\alpha^{2}+\beta^{2}}{2}}$ is integrable as $\theta\to \theta'$, and the error is uniform, we find

\begin{align}
\notag n^{-\frac{\alpha^{2}+\beta^{2}}{2}}I_3&\to\frac{G(1+\frac{\alpha}{2}-i\frac{\beta}{2})^{2}G(1+\frac{\alpha}{2}+i\frac{\beta}{2})^{2}}{G(1+\alpha)^{2}}\\
&\qquad \times \int_{0\leq |\theta-\theta'|<  2t_0}g(\theta)g(\theta')|e^{i\theta}-e^{i\theta'}|^{-\frac{\alpha^{2}+\beta^{2}}{2}}d\theta d\theta'.
\end{align}
 
Putting things together yields the claim. 
 
\end{proof}

\subsection{Asymptotics of the normalization constants} To prove Proposition \ref{prop:var}, we only need to calculate the asymptotics of the normalizing constants, i.e. $\E(f_{n,\alpha,\beta}^{(k)}(0))$ and $\E(f_{n,\alpha,\beta}(0))$.

\begin{lemma}\label{le:const}
For any fixed $k$, 
\begin{equation}
\lim_{n\to\infty}\E(f_{n,\alpha,\beta}^{(k)}(0))=e^{\frac{\alpha^{2}+\beta^{2}}{4}\sum_{j=1}^{k}\frac{1}{j}}
\end{equation}

\noindent and

\begin{equation}
\lim_{n\to\infty}n^{-\frac{\alpha^{2}+\beta^{2}}{4}}\E(f_{n,\alpha,\beta}(0))=\frac{G(1+\frac{\alpha}{2}-i\frac{\beta}{2})G(1+\frac{\alpha}{2}+i\frac{\beta}{2})}{G(1+\alpha)}
\end{equation}

\end{lemma}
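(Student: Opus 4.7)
Both identities will be reduced to asymptotics of Toeplitz determinants via the Heine--Szegő identity (Theorem \ref{th:hs}), after which the results already collected in this section do the work.

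For the first identity, expanding $\mathrm{Tr}\,U_n^{\pm j} = \sum_{p=1}^n e^{\pm ij\theta_p}$ lets one write $f_{n,\alpha,\beta}^{(k)}(0) = \prod_{p=1}^n e^{L(\theta_p)}$ with
\[
L(\phi) = -\frac{1}{2}\sum_{j=1}^{k}\frac{1}{j}\bigl((\alpha - i\beta)e^{ij\phi} + (\alpha + i\beta)e^{-ij\phi}\bigr),
\]
so by Heine--Szegő $\E(f_{n,\alpha,\beta}^{(k)}(0)) = D_{n-1}(e^{L})$. Since $e^{L}$ is smooth with no Fisher--Hartwig singularities, this is the $m=0$ specialization of Theorem \ref{th:fh} (equivalently, a complex-valued version of Theorem \ref{th:sz}). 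Reading off $V := L$, one has $V_0 = 0$, $V_j = -(\alpha - i\beta)/(2j)$ and $V_{-j} = -(\alpha + i\beta)/(2j)$ for $1\le j\le k$, and $V_j = 0$ otherwise, so $V_j V_{-j} = (\alpha^2+\beta^2)/(4j^2)$ and
\[
\sum_{j\ge 1} j V_j V_{-j} = \frac{\alpha^2+\beta^2}{4}\sum_{j=1}^{k}\frac{1}{j},
\]
which gives the first claimed limit.

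For the second identity, the choice of branch in Definition \ref{def:main} gives $\mathrm{Im}\log(1-e^{i\phi}) = \phi/2 - \pi/2$ for $\phi\in(0,2\pi)$, so
\[
\E(f_{n,\alpha,\beta}(0)) = D_{n-1}(\sigma), \qquad \sigma(\phi) = |1 - e^{i\phi}|^{\alpha}\, e^{\beta(\phi/2 - \pi/2)}.
\]
The plan is to match $\sigma$ to the Fisher--Hartwig normal form \eqref{eq:fh} with $m=0$, $z_0 = 1$, $\alpha_0 = \alpha/2$, $\beta_0 = -i\beta/2$, and $V\equiv 0$. With these parameters, for $z = e^{i\phi}$ and $\phi\in(0,2\pi)$ one has $g_{1,\beta_0}(z) = e^{-i\pi\beta_0} = e^{-\pi\beta/2}$, $z^{\beta_0} = e^{\phi\beta/2}$, and $|z - 1|^{2\alpha_0} = |1-e^{i\phi}|^{\alpha}$, which reproduces $\sigma(\phi)$ exactly. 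The hypotheses of Theorem \ref{th:fh} are trivially met: $|||\beta||| = 0$, $\mathrm{Re}\,\alpha_0 = \alpha/2 > -1/2$ is exactly $\alpha > -1$ (implied by our standing assumption $\alpha > -1/2$), $\alpha_0 \pm \beta_0 = \alpha/2 \mp i\beta/2$ is never a negative integer (for $\beta\ne 0$ it is non-real, and for $\beta = 0$ the condition $\alpha/2 > -1/2$ suffices), and the regularity condition on $V$ is vacuous. Plugging into Theorem \ref{th:fh}, the $e^{nV_0 + \cdots}$ prefactors and the empty product over $0\le j<k\le m$ are all $1$, leaving
\[
D_{n-1}(\sigma) = n^{\alpha_0^2 - \beta_0^2}\,\frac{G(1+\alpha_0+\beta_0)G(1+\alpha_0-\beta_0)}{G(1+2\alpha_0)}(1+\mathit{o}(1)),
\]
which with $\alpha_0^2 - \beta_0^2 = (\alpha^2+\beta^2)/4$ is exactly the second claimed asymptotic.

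The work here is essentially bookkeeping; the only point requiring a little care, which I would treat as the main obstacle, is the identification of $\sigma$ with the Fisher--Hartwig normal form --- in particular reconciling the branch convention of Definition \ref{def:main} with the convention encoded in $g_{z_0,\beta_0}$ and the factor $z^{\beta_0}$, and then checking that the imaginary parameter $\beta_0 = -i\beta/2$ still satisfies the hypotheses of Theorem \ref{th:fh}. Once this matching is established the two limits drop out by direct substitution.
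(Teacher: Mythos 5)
Your proof is correct and follows essentially the same route as the paper: Heine--Szegő plus Szegő asymptotics for the truncated field (the paper invokes the Strong Szegő theorem directly, which applies since your $L$ is in fact real-valued, while you route through the $m=0$ case of Theorem \ref{th:fh} --- the two are equivalent here), and the single-singularity case of Theorem \ref{th:fh} with $\alpha_0=\alpha/2$, $\beta_0=-i\beta/2$ for the untruncated one. Your careful matching of the branch convention to the factors $z^{\beta_0}g_{z_0,\beta_0}(z)$ is exactly the identification the paper relies on (and records in the remark following Lemma \ref{le:var}).
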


\begin{proof}
By Heine-Szeg\"o (Theorem \ref{th:hs}), 

\begin{equation}
\E(f_{n,\alpha,\beta}^{(k)}(0))=D_{n-1}\left(e^{-\frac{1}{2}\sum_{j=1}^{k}\frac{1}{j}\left((\alpha-\beta i)e^{ij\phi}+(\alpha+i\beta)e^{-ij\phi}\right)}\right)
\end{equation}

\noindent and by the Strong Szeg\"o theorem (Theorem \ref{th:sz})

\begin{equation}
D_{n-1}\left(e^{-\frac{1}{2}\sum_{j=1}^{k}\frac{1}{j}\left((\alpha-\beta i)e^{ij\phi}+(\alpha+i\beta)e^{-ij\phi}\right)}\right)=e^{\frac{\alpha^{2}+\beta^{2}}{4}\sum_{j=1}^{k}\frac{1}{j}+\mathit{o}(1)}.
\end{equation}

For the second normalizing constant, one could note that it is a Selberg-Morris integral and can be written explicitly as a product of ratios of $\Gamma$-functions, but to avoid computations, we make use of Theorem \ref{th:fh}. We have $\E(f_{n,\alpha,\beta}(0))=D_{n-1}(|1-e^{i\phi}|^{\alpha}e^{\beta \mathrm{Im}\log(1-e^{i\phi})})$ which in the framework of Theorem \ref{th:fh} corresponds to $m=0$, $V=0$, $\alpha_0=\frac{\alpha}{2}$, and $\beta_0=-i\frac{\beta}{2}$ so that the theorem implies that

\begin{equation}
\lim_{n\to\infty}n^{-\frac{\alpha^{2}+\beta^{2}}{4}}\E(f_{n,\alpha,\beta}(0))=\frac{G(1+\frac{\alpha}{2}-i\frac{\beta}{2})G(1+\frac{\alpha}{2}+i\frac{\beta}{2})}{G(1+\alpha)}.
\end{equation}

\end{proof}

\subsection{Proof of Proposition \ref{prop:var}}

Putting together Corollaries \ref{cor:sigma1}, \ref{cor:sigma2}, and \ref{cor:sigma3} as well as Lemma \ref{le:const} with Lemma \ref{le:var}, we find

\begin{align}
\lim_{n\to\infty}\notag &\E\left(\left(\int_0^{2\pi}g(\theta)\mu_{n,\alpha,\beta}^{(k)}(d\theta)-\int_0^{2\pi}g(\theta)\mu_{n,\alpha,\beta}(d\theta)\right)^{2}\right)\\
\notag &=\frac{e^{\frac{1}{2}(\alpha^{2}+\beta^{2})\sum_{j=1}^{k}\frac{1}{j}}\int_{0}^{2\pi}\int_0^{2\pi}g(\theta)g(\theta')e^{\frac{(\alpha^{2}+\beta^{2})}{2}\sum_{j=1}^{k}\frac{1}{j}\cos (j(\theta-\theta'))}d\theta d\theta'}{\left(e^{\frac{\alpha^{2}+\beta^{2}}{4}\sum_{j=1}^{k}\frac{1}{j}}\right)^{2}}\\
\notag &\qquad -2\lim_{n\to\infty}\frac{n^{\frac{\alpha^{2}+\beta^{2}}{4}}\frac{G\left(1+\frac{\alpha}{2}-i\frac{\beta}{2}\right)G\left(1+\frac{\alpha}{2}+i\frac{\beta}{2}\right)}{G(1+\alpha)}e^{\frac{1}{4}(\alpha^{2}+\beta^{2})\sum_{j=1}^{k}\frac{1}{j}}}{ n^{\frac{\alpha^{2}+\beta^{2}}{4}}\frac{G(1+\frac{\alpha}{2}-i\frac{\beta}{2})G(1+\frac{\alpha}{2}+i\frac{\beta}{2})}{G(1+\alpha)}e^{\frac{\alpha^{2}+\beta^{2}}{4}\sum_{j=1}^{k}\frac{1}{j}}}\\
\notag &\qquad \times\int_0^{2\pi}\int_0^{2\pi}g(\theta)g(\theta')e^{\frac{1}{2}(\alpha^{2}+\beta^{2})\sum_{j=1}^{k}\frac{1}{j}\cos j(\theta-\theta')}d\theta d\theta'\\
&\qquad +\lim_{n\to\infty}\frac{n^{\frac{\alpha^{2}+\beta^{2}}{2}}\frac{G(1+\frac{\alpha}{2}-i\frac{\beta}{2})^{2}G(1+\frac{\alpha}{2}+i\frac{\beta}{2})^{2}}{G(1+\alpha)^{2}}}{\left(n^{\frac{\alpha^{2}+\beta^{2}}{4}}\frac{G(1+\frac{\alpha}{2}-i\frac{\beta}{2})G(1+\frac{\alpha}{2}+i\frac{\beta}{2})}{G(1+\alpha)}\right)^{2}}\\
\notag &\qquad \times \int_0^{2\pi}\int_0^{2\pi}g(\theta)g(\theta')|e^{i\theta}-e^{i\theta'}|^{-\frac{\alpha^{2}+\beta^{2}}{2}}d\theta d\theta'\\
\notag&=\int_{[0,2\pi]^2}g(\theta)g(\theta')\left(|e^{i\theta}-e^{i\theta'}|^{-\frac{\alpha^{2}+\beta^{2}}{2}}-e^{\frac{\alpha^{2}+\beta^{2}}{2}\sum_{j=1}^{k}\frac{1}{j}\cos j(\theta-\theta')}\right)d\theta d\theta'.
\end{align}

As this quantity is non-negative for all $k$ (it is a limit of variances), it tends to zero as $k\to\infty$ due to Fatou's lemma once we write the integral as a difference of two integrals.

\section{Proof of the main result}

We are now in a position to prove our main theorem. In the previous section, we proved that for a non-negative continuous function $g$

\begin{equation}
\E\left(\left(\int_0^{2\pi}g(\theta)\mu_{n,\alpha,\beta}^{(k)}(d\theta)-\int_0^{2\pi}g(\theta)\mu_{n,\alpha,\beta}(d\theta)\right)^{2}\right)\to 0
\end{equation}

\noindent as we first let $n\to\infty$ and then $k\to\infty$, so in particular, 

\begin{equation}
\int_0^{2\pi}g(\theta)\mu_{n,\alpha,\beta}^{(k)}(d\theta)-\int_0^{2\pi}g(\theta)\mu_{n,\alpha,\beta}(d\theta)\stackrel{d}{\to}0
\end{equation}

\noindent in the same limit. Thus if we are able to prove that 

\begin{equation}
\int_0^{2\pi}g(\theta)\mu_{n,\alpha,\beta}^{(k)}(d\theta)\stackrel{d}{\to }\int_0^{2\pi}g(\theta)\mu_{\sqrt{\alpha^{2}+\beta^{2}}}(d\theta)
\end{equation}

\noindent in the same limit, we will be done (for a detailed formulation of this type of argument see e.g. Theorem 4.28 in \cite{kallenberg}). To do this, we first prove the following lemma, which is just a corollary of the results of Diaconis and Shahshahani - (ie. Theorem \ref{th:ds} in this paper).

\begin{lemma}

For any fixed $k$, any $\alpha,\beta\in \R$, and any continuous function $g$ defined on the unit circle

\begin{equation}
\int_0^{2\pi}g(\theta)\mu_{n,\alpha,\beta}^{(k)}(d\theta)\stackrel{d}{\to }\int_0^{2\pi}g(\theta)\mu_{\sqrt{\alpha^{2}+\beta^{2}}}^{(k)}(d\theta)
\end{equation}

\noindent as $n\to\infty$ (for the definition of $\mu_{\sqrt{\alpha^{2}+\beta^{2}}}^{(k)}(d\theta)$ see the appendix).

\end{lemma}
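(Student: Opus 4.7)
The strategy is to observe that for $k$ fixed, the random function $\theta\mapsto \log f_{n,\alpha,\beta}^{(k)}(\theta)$ is just a deterministic continuous functional of the finite vector of traces $(\mathrm{Tr}\, U_n^{j})_{j=1}^{k}$, so Theorem \ref{th:ds} together with the continuous mapping theorem should deliver everything. More concretely, the plan is to first promote convergence of traces to convergence in law of the whole random continuous function $f^{(k)}_{n,\alpha,\beta}(\cdot)$ in $C([0,2\pi])$, then apply the integration-against-$g$ functional, and finally handle the (deterministic) normalization via Slutsky.

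In the first step I would write the limit Gaussians as $(Z_j)_{j=1}^{k}$ from Theorem \ref{th:ds} and set
\begin{equation}
F^{(k)}(\theta)=\exp\!\left(-\tfrac{1}{2}\sum_{j=1}^{k}\tfrac{1}{\sqrt{j}}\bigl((\alpha-\beta i)Z_j e^{-ij\theta}+(\alpha+\beta i)Z_j^{*}e^{ij\theta}\bigr)\right).
\end{equation}
Since $(w_1,\ldots,w_k)\mapsto \big(\theta\mapsto\exp\!\big(-\tfrac{1}{2}\sum_{j=1}^{k}\tfrac{1}{j}((\alpha-\beta i)w_j e^{-ij\theta}+(\alpha+\beta i)\overline{w_j}e^{ij\theta})\big)\big)$ is continuous $\C^k\to C([0,2\pi])$ (a finite trigonometric sum, exponentiated), Theorem \ref{th:ds} and the continuous mapping theorem give $f^{(k)}_{n,\alpha,\beta}(\cdot)\stackrel{d}{\to}F^{(k)}(\cdot)$ in $C([0,2\pi])$ with the sup-norm topology. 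Then the map $h\mapsto \int_0^{2\pi}g(\theta)h(\theta)\,d\theta$ is continuous on $C([0,2\pi])$, so continuous mapping applies once more.

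Next I would identify $F^{(k)}$ in law with the exponential of the truncated limit field. Replacing $Z_j$ by $-Z_j$ and then by $Z_j^{*}$ (both transformations preserve the joint law of the i.i.d. complex Gaussians), the exponent of $F^{(k)}$ equals in law, as a process in $\theta$, the partial sum $\alpha X^{(k)}(\theta)+\beta\widehat X^{(k)}(\theta)$ truncated from the expressions in Theorem \ref{th:hko}. The rotation-invariance argument given in the remark after Theorem \ref{th:hko} applies verbatim to the truncations, so
\begin{equation}
\alpha X^{(k)}+\beta\widehat X^{(k)}\stackrel{d}{=}\sqrt{\alpha^{2}+\beta^{2}}\,X^{(k)}
\end{equation}
as processes, and therefore $F^{(k)}(\cdot)\stackrel{d}{=}\exp\!\bigl(\sqrt{\alpha^{2}+\beta^{2}}\,X^{(k)}(\cdot)\bigr)$.

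Finally, Lemma \ref{le:const} gives $\E f^{(k)}_{n,\alpha,\beta}(0)\to e^{\frac{\alpha^{2}+\beta^{2}}{4}\sum_{j=1}^{k}1/j}$, and a direct Gaussian computation gives $\E\bigl(e^{\sqrt{\alpha^{2}+\beta^{2}}X^{(k)}(0)}\bigr)=e^{\frac{\alpha^{2}+\beta^{2}}{4}\sum_{j=1}^{k}1/j}$, so the two normalizations agree in the limit. Combining the displayed convergence for $\int g\,f^{(k)}_{n,\alpha,\beta}$ with this deterministic normalization via Slutsky yields
\begin{equation}
\int_0^{2\pi}g(\theta)\mu_{n,\alpha,\beta}^{(k)}(d\theta)\stackrel{d}{\to}\int_0^{2\pi}g(\theta)\mu_{\sqrt{\alpha^{2}+\beta^{2}}}^{(k)}(d\theta),
\end{equation}
as required. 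There is no real obstacle here: the only thing to keep in mind is that one must upgrade the finite-dimensional Diaconis--Shahshahani convergence to convergence in $C([0,2\pi])$, which is essentially automatic because the dependence on $\theta$ is through a fixed finite linear combination of uniformly continuous functions, and must verify the in-law identification $F^{(k)}\stackrel{d}{=}\exp(\sqrt{\alpha^{2}+\beta^{2}}X^{(k)})$ via the symmetries of the $(Z_j)$.
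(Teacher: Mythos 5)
Your argument is correct and essentially identical to the paper's: both apply the continuous mapping theorem to the Diaconis--Shahshahani trace vector (the paper maps it directly to the scalar $\int g\, f^{(k)}_{n,\alpha,\beta}$ rather than passing through $C([0,2\pi])$ first), identify the limit law via the symmetries $Z_j\eqlaw -Z_j\eqlaw Z_j^{*}$ and rotation invariance, and match the normalizations using Lemma \ref{le:const}. The only cosmetic slip is that the map you display should carry coefficients $\tfrac{1}{\sqrt{j}}w_j$ rather than $\tfrac{1}{j}w_j$, so that composing with the normalized traces $\tfrac{1}{\sqrt{j}}\mathrm{Tr}\,U_n^{j}$ from Theorem \ref{th:ds} reproduces the $\tfrac{1}{j}\mathrm{Tr}\,U_n^{j}$ appearing in $f^{(k)}_{n,\alpha,\beta}$, consistently with your stated limit $F^{(k)}$.
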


\begin{proof}

Consider the function $F:\C^k\to \C$,

\begin{equation}
F(z_1,...,z_k)=\int_0^{2\pi}g(\theta)e^{-\frac{1}{2}\sum_{j=1}^k\frac{1}{\sqrt{j}}\left((\alpha-i\beta)z_j e^{-ij\theta}+(\alpha+i\beta)\overline{z_j}e^{ij\theta}\right)}d\theta.
\end{equation}

This is continuous as $g$ is bounded, so we see (by \cite[Theorem 4.27]{kallenberg}) that Theorem \ref{th:ds} implies that

\begin{align}
\notag F\left(\mathrm{Tr}U_n,...,\frac{1}{\sqrt{k}}\mathrm{Tr}U_n^k\right)=&\int_0^{2\pi}f_{n,\alpha,\beta}^{(k)}(\theta)g(\theta)\frac{d\theta}{2\pi}\\
&\stackrel{d}{\to}\int_0^{2\pi} e^{-\frac{1}{2}\sum_{j=1}^k\frac{1}{\sqrt{j}}\left((\alpha-i\beta)Z_j e^{-ij\theta}+(\alpha+i\beta)Z_j^*e^{ij\theta}\right)}d\theta\\
\notag &\stackrel{d}{=}\int_0^{2\pi} e^{\frac{\sqrt{\alpha^2+\beta^2}}{2}\sum_{j=1}^k\frac{1}{\sqrt{j}}\left(Z_j e^{ij\theta}+Z_j^*e^{-ij\theta}\right)}d\theta
\end{align}

\noindent as $n\to \infty$. Here $(Z_j)_j$ are i.i.d. standard complex Gaussians and we used again the fact that for any $\phi\in \R$, $(e^{i\phi}Z_j)_j\stackrel{d}{=}(-Z_j)_j$ as well as the fact that $(Z_j)_j\stackrel{d}{=}(Z_j^*)_j$. Now combining this with Lemma \ref{le:const} gives the desired result.
\end{proof}

As $\mu_{\sqrt{\alpha^{2}+\beta^{2}}}$ is defined to be the limit of $\mu_{\sqrt{\alpha^{2}+\beta^{2}}}^{(k)}$, this immediately implies that for continuous functions $g$, as we first let $n\to\infty$ and then $k\to\infty$,

\begin{equation}
\int_0^{2\pi}g(\theta)\mu_{n,\alpha,\beta}^{(k)}(d\theta)\stackrel{d}{\to}\int_0^{2\pi}g(\theta)\mu_{\sqrt{\alpha^{2}+\beta^{2}}}(d\theta)
\end{equation}

Putting things together, we conclude that

\begin{equation}
\int_0^{2\pi}g(\theta)\mu_{n,\alpha,\beta}(d\theta)\stackrel{d}{\to}\int_0^{2\pi}g(\theta)\mu_{\sqrt{\alpha^{2}+\beta^{2}}}(d\theta)
\end{equation}

\noindent which was what we wanted to prove.

\section{Discussion and open problems}

The main goal of this article was to prove a new type of geometric limit theorem describing the asymptotic behavior of the characteristic polynomial of a large random unitary matrix and thus linking random matrix theory to the theory of Gaussian multiplicative chaos. As noted in the introduction, to the author's knowledge, this is the first rigorous proof of such a link. From the point of view of random matrix theory, this connection sheds light on the global multifractal structure of the eigenvalues of a CUE matrix, and gives one new tools for studying some asymptotic properties of the eigenvalues. From the point of view of Gaussian multiplicative chaos, this is - to the author's knowledge - the first non-trivial model where Gaussian multiplicative chaos appears naturally. By non-trivial we mean here an approximation of a Gaussian field that is neither Gaussian nor a martingale, and appears naturally from other considerations. From either point of view, this connection suggests exciting new questions to explore and we discuss some of them here. 

\subsection{Other values of $\alpha$ and $\beta$}

Non-trivial multiplicative chaos measures $e^{\gamma X(\theta)-\frac{\gamma^{2}}{2}\E(X(\theta)^{2})}d\theta$ can be constructed for all values of $\gamma$. Our restriction to the $L^{2}$-phase i.e. $\alpha^{2}+\beta^{2}<2$ was due to the fact that we estimated variances. For $\alpha^{2}+\beta^{2}\geq 2$, these variances will blow up and the estimates would no longer be good. Moreover, the condition that $\alpha>-\frac{1}{2}$ was due to asymptotic analysis of the Toeplitz determinant being valid in this regime.

\vspace{0.3cm}

A natural question to ask is then can one go beyond these values of $\alpha$ and $\beta$. In the $L^{1}$-phase, namely where the martingale defining the multiplicative chaos measure is uniformly integrable (in our setting this means $\alpha^2+\beta^2<4$), one could expect that perhaps instead of estimating variances one could estimate moments of order $p$ with $1<p<2$. While this would seem to make the Toeplitz determinant approach impossible, perhaps there is a way to rely on variance estimates as is common in multiplicative chaos theory (there moments of order $p$ are often estimated using variance estimates in a clever way).

\vspace{0.3cm}

Going out of the $L^{1}$-phase, the construction of multiplicative chaos measures becomes much more challenging (it is no longer enough to normalize by the mean - see \cite{drsv1,drsv2,rvm} - and presumably one will need a different kind of approach in this regime. A related question is studying the maximum of $\log |p_n(\theta)|$. The conjecture of Fyodorov and Keating is that this should behave like the maximum of a log-correlated field (see \cite{mad,bdz,drz}). In the case of a log-correlated field, the multiplicative chaos measures play a role in understanding the behavior of the maximum. Again, analyzing this in the case of $\log |p_n(\theta)|$ will presumably require some other kind of approach.

\vspace{0.3cm}

It might also be possible to relax the $\alpha>-\frac{1}{2}$ condition to some degree. For example, in the case of a single Fisher-Hartwig singularity, the condition that $\mathrm{Re}(\alpha_0)>-\frac{1}{2}$ in Theorem \ref{th:fh} can be significantly relaxed - see \cite{ehrhardt}.

\vspace{0.3cm}

Another natural extension is to the case of complex $\alpha$ and $\beta$ (for simplicity, let us discuss the $L^{2}$ phase). Indeed as remarked in the appendix (see Remark \ref{rem:cplx}) complex Gaussian multiplicative chaos can be considered. Also asymptotics of Toeplitz determinants with complex parameters are known. The issue here is that for complex parameters, $\log D_{n-1}(\sigma_{3,\theta,\theta'})$ may have singularities for some values of $\theta,\theta'$ - see Theorem 1.8 in \cite{claeyskrasovsky}. That being said, these singularities should correspond to zeros in the asymptotics of $D_{n-1}(\sigma_{3,\theta,\theta'})$ (see Remark 1.9 in \cite{claeyskrasovsky}) so they should not be problematic.

\subsection{Other random matrix models}

Another natural question is what depends on the special structure of the CUE here. The author's guess is that perhaps this connection between multiplicative chaos and random matrix theory is quite universal. There are many random matrix models where the fluctuations of the characteristic polynomial are log-correlated Gaussian fields: the GUE, one-dimensional $\beta$-ensembles, the Ginibre ensemble, and random normal matrices\cite{fs,johansson,rv,ahm}. Moreover, for the GUE, there are results in \cite{krasovsky} corresponding to Theorem \ref{th:fh} here and one essentially needs to modify the results in \cite{claeyskrasovsky} to the GUE setting to prove a result as ours in the GUE case. Again in the GUE case presumably the $L^{2}$-phase is the simplest one and extending beyond that may be difficult. For conjectures regarding for example the maximum of the characteristic polynomial, see \cite{fs2}.

\vspace{0.3cm}

What is common for all of these mentioned models is that they are $\beta$-ensembles. Indeed, for when the Dyson index $\beta$ equals 2 in a one-dimensional model (on the real axis or the unit circle), our approach will lead to a Toeplitz or Hankel determinant whose analysis is presumably possible under suitable regularity conditions. In fact, generalizing our result to the case with a non-trivial potential on the unit circle (say analytic in a neighborhood of the unit circle) should not require much. The much more complicated question is what can one do in the two-dimensional case or when $\beta\neq 2$ and a Riemann-Hilbert approach might not exist.

\subsection{Limiting distribution of the total mass} 
We also point out a conjecture of Fyodorov and Bouchaud \cite{fb} on the total mass of the measure $\mu_\beta$. Combining this with our results suggests a conjecture on the asymptotic distribution of powers of the absolute value of the characteristic polynomial. There they provide an analytic continuation of the positive integer moments of the total mass and conjecture that the law of the total mass can be given in terms of negative powers of an exponentially distributed random variable. Such an analytic continuation is not unique (only finitely many positive integer moments exist so they can't determine the distribution) so this result is still an open question.

\section*{Appendix: Gaussian Multiplicative Chaos and Sobolev Spaces}

As mentioned in the introduction, Gaussian Multiplicative Chaos is a theory due to Kahane \cite{kahane}. One of the consequences of the theory is that it provides a method for exponentiating Gaussian fields with a logarithmic singularity in their covariance. More precisely, assume that one has a centered Gaussian field $(X(x))_{x\in A}$, where $A$ is say some open subset of $\R^{d}$ and the covariance kernel $C(x,y)=\E(X(x)X(y))$ has a logarithmic singularity: $C(x,y)\sim -\log |x-y|$ as $x\to y$. The goal is to construct a random measure of the form $e^{X(x)-\frac{1}{2}\E(X(x)^{2})}dx$. 

\vspace{0.3cm}

Due to the logarithmic singularity in the covariance of $X$, the field can not be realized as a random function, though it can be understood as a random distribution. In any event, the exponentiation can not be performed directly. The most natural way to do it is to regularize $X$ into a function say $X_n$ (where $X_n\to X$ in some suitable sense as $n\to\infty$), construct the measure $e^{X_n(x)-\frac{1}{2}\E(X_n(x)^{2})}dx$, and if this converges to some limiting measure, interpret the limit as $e^{X(x)-\frac{1}{2}\E(X(x)^{2})}dx$. 

\vspace{0.3cm}

One then is posed with the question of how should the field be regularized. One would naturally want the regularization to behave nicely with respect to a limiting procedure. One of the simplest random objects with rich limit theory is a martingale. This was Kahane's approach and his fundamental theorem is the following (see \cite{kahane,gmcrev}).

\begin{theorem}[Kahane]
Assume that for $x,y\in A$, $T>0$ and a continuous and bounded function $g$, 

\begin{equation}
C(x,y)=\log \frac{T}{|x-y|}+g(x,y),
\end{equation}

\noindent and assume that we have a decomposition

\begin{equation}
C(x,y)=\sum_{k=1}^{\infty}K_k(x,y),
\end{equation}

\noindent where $K_k$ are continuous and positive definite covariance kernels. Then if one defines on the same probability space the centered Gaussian random fields $(Y_k)_{k=1}^{\infty}$ , where $Y_k$ is independent of $Y_{k'}$ for $k\neq k'$ and $Y_k$ has covariance $K_k$, as well as the fields $X_n=\sum_{k=1}^{n}Y_k$ then for $\beta\in \R$, the measures 

\begin{equation}
M_{\beta,n}(dx)=e^{\beta X_n(x)-\frac{\beta^{2}}{2}\sum_{k=1}^{n}K_k(x,x)}dx
\end{equation}

\noindent converge almost surely in the space of Radon measures (with respect to the topology of weak convergence) to some random measure $M_\beta(dx)$. This measure is non-trivial for $\beta^{2}<2d$ and the zero measure for $\beta^{2}\geq 2d$. If all of the $K_k$ in the decomposition of $C$ are non-negative, the law of $M_\beta$ is independent of the specific decomposition.

\end{theorem}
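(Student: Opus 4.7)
The plan is to exploit the explicit martingale structure built into the additive decomposition of the covariance, using the independence of the $Y_k$'s as the motor. Fix a compact Borel set $B\subset A$ and set $Z_n(B)=M_{\beta,n}(B)$. Because the $Y_k$ are independent and $\E[e^{\beta Y_{n+1}(x)-\frac{\beta^2}{2}K_{n+1}(x,x)}]=1$ for each fixed $x$, Fubini with the tower property shows that $(Z_n(B))_{n\geq 0}$ is a nonnegative $\Filt_n$-martingale with $\Filt_n=\sigma(Y_1,\dots,Y_n)$, so it has an almost sure limit $Z_\infty(B)\geq 0$. Applying this to a countable dense family of continuous test functions on $A$ (say a $\Q$-algebra generated by a countable base of balls) and extracting diagonally, one obtains on a single full-measure event a random Radon measure $M_\beta$ with $M_{\beta,n}\to M_\beta$ in the weak topology.

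For non-triviality in the $L^2$-phase, I would use the Gaussian exponential moment identity to write
\begin{equation*}
\E[Z_n(B)^2]=\int_B\int_B e^{\beta^2\sum_{k=1}^n K_k(x,y)}\,dx\,dy\longrightarrow \int_B\int_B e^{\beta^2 C(x,y)}\,dx\,dy.
\end{equation*}
The hypothesis on $C$ makes the limiting integrand comparable to $|x-y|^{-\beta^2}$, which lies in $L^1(B\times B)$ precisely when $\beta^2<d$. In this range $(Z_n(B))_n$ is bounded in $L^2$, hence uniformly integrable, so $\E Z_\infty(B)=|B|>0$. To push the non-triviality statement all the way to the $L^1$-threshold $\beta^2<2d$, the standard route is to uniformly bound $\E[Z_n(B)^p]$ for some $p\in(1,2)$; one obtains this by dominating the fields $X_n$ by the scales of a Mandelbrot-type multiplicative cascade through Kahane's convexity inequality (stated below), whose $L^p$-boundedness up to $\beta^2=2d$ is classical.

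For triviality when $\beta^2\geq 2d$, I would apply the same convexity inequality in the opposite direction, together with the concave function $x\mapsto x^q$ for $0<q<1$, to compare $M_{\beta,n}$ with a supercritical cascade and deduce $\E[Z_n(B)^q]\to 0$, which forces $Z_\infty(B)=0$ almost surely. For the independence of the limit law from the decomposition, the main tool is Kahane's convexity inequality: for centered Gaussian fields $X,\widetilde X$ with $\E X(x)X(y)\leq \E \widetilde X(x)\widetilde X(y)$ and every convex $F:[0,\infty)\to\R$,
\begin{equation*}
\E F\!\left(\int_B e^{X(x)-\frac{1}{2}\E X(x)^2}\,dx\right)\leq \E F\!\left(\int_B e^{\widetilde X(x)-\frac{1}{2}\E \widetilde X(x)^2}\,dx\right).
\end{equation*}
The proof of this inequality is a one-parameter Gaussian interpolation plus integration by parts, which crucially uses the nonnegativity of the kernel difference; given two nonnegative decompositions $C=\sum_k K_k=\sum_k \widetilde K_k$, applying the inequality to each pair of partial sums in both directions and passing to the limit yields equality of the Laplace functionals of $M_\beta$ and $\widetilde M_\beta$.

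The principal obstacle is the passage from the $L^2$ regime $\beta^2<d$ to the $L^1$ threshold $\beta^2=2d$: the simple variance bound no longer controls anything, and one must import the finer subcritical moment theory of multiplicative cascades and transfer it via Kahane's convexity inequality. The sharp collapse at $\beta^2=2d$ on the other side is similarly delicate, since the martingale is no longer uniformly integrable and one has to rule out mass escaping to infinity by an appropriate scale-by-scale comparison.
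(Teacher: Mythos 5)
This statement is Kahane's foundational theorem on Gaussian multiplicative chaos, which the paper imports verbatim from \cite{kahane} (see also \cite{gmcrev}) and does not prove, so there is no in-paper argument to compare against. Your outline is the standard classical route and is essentially correct as a sketch: the martingale structure from the independent increments $Y_k$, almost sure convergence on a countable measure-determining family of test functions, the $L^2$ computation giving non-triviality for $\beta^2<d$, the transfer of $L^p$ cascade moments ($1<p<2$) via Kahane's convexity inequality to reach $\beta^2<2d$, the concave-moment comparison for degeneracy at and beyond $\beta^2=2d$, and the convexity inequality again for uniqueness. Two points deserve more care than your phrasing suggests. First, in the uniqueness step you cannot in general compare the partial sums $\sum_{k\leq n}K_k$ and $\sum_{k\leq m}\widetilde K_k$ pointwise; the correct argument uses that nonnegativity of the remaining kernels gives $\sum_{k\leq n}K_k\leq C$, so one compares each finite approximation against the \emph{full} limiting field of the other decomposition (justifying the passage to the limit on the right-hand side), sends $n\to\infty$, and concludes equality of $\E F(M_\beta(g))$ for all convex $F$, which together with equal means determines the law. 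Second, the cascade comparison requires actually exhibiting a cascade-type field whose covariance sandwiches $C$ up to a bounded additive error — this is exactly where the hypothesis $C(x,y)=\log(T/|x-y|)+g(x,y)$ with $g$ bounded is used, and it is the technical heart of the theorem; you correctly flag it as the main obstacle but it is not a routine step. With those caveats, your proposal faithfully reconstructs Kahane's original proof strategy.
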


Our interest will be in the field $X$ which can be viewed as the restriction of the whole plane Gaussian Free Field restricted to the unit circle, namely it has covariance $\E(X(\theta)X(\theta'))=-\frac{1}{2}\log |e^{i\theta}-e^{i\theta'}|$ (we choose the normalizing constant $\frac{1}{2}$ simply to be consistent in notation). To make precise sense of this object, we interpret it as an element of a Sobolev space.

\begin{definition}
For $s\in \R$, consider the space of formal Fourier series

\begin{equation}
\mathcal{H}^{s}=\left.\left\lbrace f\sim \sum_{k\in \Z}f_k e^{ik\theta}\right|\sum_{k\in \Z}(1+k^{2})^{s}|f_k|^{2}<\infty\right\rbrace
\end{equation}

\noindent with inner product

\begin{equation}
\langle f,g\rangle_s=\sum_{k\in \Z}(1+k^{2})^{s}f_k g_k^{*}.
\end{equation}

The subspace $\lbrace f\in \mathcal{H}^{s}|f_0=0\rbrace$ is denoted by $\mathcal{H}_0^{s}$.
\end{definition}

\begin{remark}
These are Hilbert spaces for all values of $s\in \R$. Moreover, for $s\geq 0$, they can be interpreted as subspaces of square integrable functions on the unit circle while for $s<0$ they are dual spaces of these and can be interpreted as spaces of generalized functions.
\end{remark}

One can then check that if $(Z_k)_{k=1}^{\infty}$ are i.i.d. standard complex Gaussians, then 

\begin{equation}
X:=\frac{1}{2}\sum_{k=1}^{\infty}\frac{1}{\sqrt{k}}\left(Z_k e^{ik\theta}+Z_k^{*}e^{-ik\theta}\right)
\end{equation}

\noindent is almost surely an element of $\mathcal{H}_0^{-s}$ for any $s>0$ and it has covariance kernel $-\frac{1}{2}\log |e^{i\theta}-e^{i\theta'}|$. Moreover, being a sum of i.i.d. Gaussian terms, this fits immediately into Kahane's theorem. Let us make the following definition:

\begin{definition}
Let $(Z_i)_{i=1}^{\infty}$ be i.i.d. standard complex Gaussians and 

\begin{equation}
X_n(\theta)=\frac{1}{2}\sum_{k=1}^{n}\frac{1}{\sqrt{k}}(Z_ke^{ik\theta}+Z_k^{*}e^{-ik\theta}).
\end{equation}

Moreover, let 

\begin{equation}
\mu_\beta^{(k)}(d\theta)=e^{\beta X_k(\theta)-\frac{\beta^{2}}{2}\E(X_k(\theta)^{2})}d\theta
\end{equation}

\noindent and 

\begin{equation}
\mu_\beta(d\theta)=\lim_{k\to\infty}\mu_\beta^{(k)}(d\theta)
\end{equation}

\noindent which exists for all $\beta\in \R$ (when the limit is in the topology of weak convergence) and is non-trivial for $|\beta|<2$.

\end{definition}

\begin{remark}
Note that the measures appearing in our case are $\mu_\beta$ for $|\beta|<\sqrt{2}$. This corresponds to the situation where $\E(\mu_\beta([0,2\pi))^{2})<\infty$ or "the $L^{2}$-phase".
\end{remark}

\begin{remark}
Note that we don't have the positivity of the covariances required for the uniqueness in Kahane's theorem, so it is not immediately clear that this measure is the same one gets through other constructions such as the one in \cite{ajks}. There have recently been generalizations to the construction of Kahane, see e.g. \cite{gmcrevisited,ongmc}. In particular, uniqueness questions relevant to our situation have been addressed in \cite{ongmc}. 
\end{remark}

\begin{remark}\label{rem:cplx}
We point out that it is natural to consider such objects also for a complex parameter $\beta$. In this case, these objects might not be complex measures: the total variation of the measure $e^{\beta X_n(\theta)-\frac{\beta^{2}}{2}\E(X_n(\theta)^{2})}d\theta$ is $e^{\mathrm{Re}(\beta) X_n(\theta)-\frac{\mathrm{Re}(\beta)^{2}-\mathrm{Im}(\beta)^{2}}{2}\E(X_n(\theta)^{2})}d\theta$. As $e^{\mathrm{Re}(\beta) X_n(\theta)-\frac{\mathrm{Re}(\beta)^{2}}{2}\E(X_n(\theta)^{2})}d\theta$ will converge to a non-trivial chaos measure (for small enough $\mathrm{Re}(\beta)$) it is reasonable to expect that for any $\beta$ with $\mathrm{Im}(\beta)\neq 0$, the $e^{\frac{\mathrm{Im}(\beta)^2}{2}\E(X_n(\theta)^2)}$-term will cause the total variation of the limit $e^{\beta X(\theta)-\frac{\beta^2}{2}\E(X(\theta)^2)}d\theta/2\pi$ to be almost surely infinite, so perhaps it can't be understood as a complex measure.  One possibility for a natural interpretation of $e^{\beta X(\theta)-\frac{\beta^{2}}{2}\E(X(\theta)^{2})}$ is as a random distribution, for example an element of $\mathcal{H}^{-s}$ for large enough $s>0$. Much of the reasoning goes through here too - one can use martingale arguments etc. For further results on complex Gaussian multiplicative chaos, see for example \cite{bmj,gmccomplex}. 
\end{remark}

\section*{Acknowledgements}
The author wishes to thank Antti Kupiainen, Eero Saksman, Yan Fyodorov, and Nicholas Simm for useful discussions. This work was partly supported by the Academy of Finland. The author also wishes to thank two anonymous referees for their careful reading of the manuscript and helpful remarks.

\end{document}